\newcommand{\N}{\mathbb N}
\newcommand{\Z}{\mathbb Z}
\newcommand{\R}{\mathbb R}
\renewcommand{\leq}{\leqslant}
\renewcommand{\geq}{\geqslant}
\renewcommand{\epsilon}{\varepsilon}
\renewcommand{\phi}{\varphi}
\newcommand{\defeq}{\coloneqq} 
\newcommand{\st}{\ s.t.\ } 
\newcommand{\divides}{\vert} 
\newcommand{\notdivides}{\nmid}
\newcommand{\norm}[1]{\lVert #1 \rVert} 
\newcommand{\abs}[1]{\lvert #1 \rvert} 
\newcommand{\indicator}[1]{\mathds 1_{#1}} 
\DeclareMathOperator{\supp}{supp} 
\DeclareMathOperator{\Log}{Log} 
\theoremstyle{plain}
\newtheorem{theorem}{Theorem}[section]
\newtheorem{lemma}[theorem]{Lemma}
\newtheorem{proposition}[theorem]{Proposition}
\newtheorem{corollary}[theorem]{Corollary}
\theoremstyle{definition}
\newtheorem{definition}[theorem]{Definition}
\numberwithin{equation}{section} 
\begin{document}

\title{Almost-sharp quantitative Duffin--Schaeffer without GCD graphs}
\author{Santiago Vazquez}
\address{King's College London, Department of Mathematics, WC2R 2LS London, United Kingdom}
\email{santiago.vazquez\_saez@kcl.ac.uk}

\begin{abstract}
    In recent work, Koukoulopoulos, Maynard and Yang proved an almost sharp quantitative bound for the Duffin--Schaeffer conjecture, using the Koukoulopoulos--Maynard technique of GCD graphs. This coincided with a simplification of the previous best known argument by Hauke, Vazquez and Walker, which avoided the use of the GCD graph machinery. In the present paper, we extend this argument to the new elements of the proof of Koukoulopoulos--Maynard--Yang. Combined with the work of Hauke--Vazquez--Walker, this provides a new proof of the almost sharp bound for the Duffin--Schaeffer conjecture, which avoids the use of GCD graphs entirely.
\end{abstract}
\maketitle


\section{Introduction}
The Duffin--Schaeffer conjecture \cite{DS41} is concerned with the proportion of real numbers $\alpha$ for which there are infinitely many reduced fractions $\frac{a}{q}$, satisfying
\begin{align}
    \label{eq:rational_approx}
    \Big\lvert \alpha -\frac{a}{q} \Big\rvert \leq \frac{\psi(q)}{q}.
\end{align}
Here, $\psi:\N\to\R_{\geq 0}$ is a fixed function, and the conjecture states that this proportion is 1 (in the sense of the Lebesgue measure) if 
\begin{align}
    \label{eq:divergence_condition}
    \sum_{q\geq 1} \frac{\psi(q)\phi(q)}{q} =\infty,
\end{align}
and it is 0 otherwise.

This question was settled by Koukoulopoulos and Maynard in \cite{KM20}, where they proved
\begin{theorem}
    [Theorem 1, \cite{KM20}]
    \label{thm:km}
    Let $\psi:\N\to\R_{\geq 0}$ be a function satisfying \eqref{eq:divergence_condition}. Then for almost all $\alpha \in [0,1]$ there are infinitely many coprime solutions $(a,q)$ to \eqref{eq:rational_approx}.
\end{theorem}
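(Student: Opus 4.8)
The plan is to deduce Theorem~\ref{thm:km} from the divergence half of the Borel--Cantelli lemma after the standard measure-theoretic reduction. Let $\mu$ denote Lebesgue measure on $[0,1]$ and set
\[
A_q \defeq \bigcup_{\substack{1\le a\le q\\ \gcd(a,q)=1}}\Bigl[\tfrac{a}{q}-\tfrac{\psi(q)}{q},\ \tfrac{a}{q}+\tfrac{\psi(q)}{q}\Bigr].
\]
The set of $\alpha$ admitting infinitely many coprime solutions to \eqref{eq:rational_approx} is precisely $\limsup_{q\to\infty}A_q$, so it suffices to prove this set has positive measure: Gallagher's zero--one law then promotes positivity to full measure. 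One may assume $\psi(q)\le\tfrac12$ for all $q$ (the complementary case, where $\psi$ is large on a set of moduli of divergent weight, is classical and easy), so that $\mu(A_q)\asymp\psi(q)\phi(q)/q$ and hypothesis~\eqref{eq:divergence_condition} reads $\sum_q\mu(A_q)=\infty$. By the Chung--Erd\H{o}s inequality, positivity of $\mu(\limsup A_q)$ follows once one establishes a quasi-independence-on-average estimate of the shape
\[
\sum_{q,r\in\mathcal S}\mu(A_q\cap A_r)\ \ll\ \Bigl(\sum_{q\in\mathcal S}\mu(A_q)\Bigr)^{2}
\]
valid for all finite collections $\mathcal S\subseteq\N$ --- or, in the sharper form going back to Erd\H{o}s and refined by Pollington--Vaughan and others, valid for a subcollection carrying a positive proportion of the weight of any prescribed finite set of moduli.

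For the off-diagonal terms I would invoke the overlap estimate of Pollington and Vaughan: for $q\ne r$ it bounds $\mu(A_q\cap A_r)$ by $\mu(A_q)\,\mu(A_r)$ times a correction factor governed by the primes dividing $qr/\gcd(q,r)^2$ (and by the sizes of $\psi$ at $q$ and $r$), a factor which is $O(1)$ unless those prime factorisations are unusually correlated. The whole difficulty is therefore concentrated in controlling the ``bad'' pairs, those for which this correction factor is large, and in showing they cannot account for more than a constant multiple of $\bigl(\sum_{q}\mu(A_q)\bigr)^{2}$.

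This is exactly the point at which Koukoulopoulos and Maynard introduced GCD graphs and ran an iterative compression. Instead, following Hauke--Vazquez--Walker, I would organise the moduli of $\mathcal S$ according to their prime factorisations and estimate $\sum_{q,r\in\mathcal S}\mu(A_q\cap A_r)$ by a self-improving recursion obtained by peeling off one (suitably chosen, typically large) prime at a time: at each step one passes to a subcollection on which a weighted ``energy'' contracts, the contraction being quantified by elementary Mertens- and Erd\H{o}s-type estimates for sums over integers with restricted prime support, which take the place of the graph-theoretic combinatorics. Incorporating the new ideas of Koukoulopoulos--Maynard--Yang that push the bound to nearly the truth then amounts to tightening this recursion and its bookkeeping. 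The main obstacle is precisely this combinatorial core: arranging that the peeling loses only a bounded --- indeed, quantitatively near-optimal --- factor, which forces one to choose the order of prime removal carefully and to match the loss incurred at each stage against the compensating gain in the anatomy of the surviving moduli. That balancing act is essentially the entire content of the Duffin--Schaeffer conjecture, here to be carried out without the GCD-graph formalism.
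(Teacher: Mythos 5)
There is a genuine gap. Your outline correctly recalls the standard measure-theoretic reduction (the sets $A_q$, Gallagher's zero--one law, the divergence Borel--Cantelli / Chung--Erd\H{o}s second-moment criterion, and the Pollington--Vaughan overlap estimate), but at the decisive point --- controlling the ``bad'' pairs $(q,r)$ for which the overlap correction factor is large, i.e.\ establishing the quasi-independence-on-average bound $\sum_{q,r\in\mathcal S}\mu(A_q\cap A_r)\ll(\sum_{q\in\mathcal S}\mu(A_q))^2$ on a suitable subcollection --- you offer only a description of a strategy (``peeling off one prime at a time'', ``a weighted energy contracts'', ``Mertens- and Erd\H{o}s-type estimates'') and then explicitly concede that making this balancing act work ``is essentially the entire content of the Duffin--Schaeffer conjecture.'' That concession is accurate: without a quantitative statement and proof of this estimate, nothing has been proved. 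The reduction you sketch is twenty-plus years old; the theorem lives entirely in the step you have deferred.

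For context, the present paper does not prove Theorem \ref{thm:km} at all --- it is quoted from \cite{KM20} --- and the body of the paper is precisely the missing engine in your sketch, in the form needed for the quantitative refinement: Theorem \ref{thm:main}, proved via a minimal-counterexample induction on the set of primes in the support (Proposition \ref{prop:structure-of-counterexample}, using the bilinear upper bound of Lemma \ref{lm:bilinear-upper-bound} and the decay-away-from-the-diagonal Lemma \ref{lm:decay-away-diagonal}) together with the resolution step (Proposition \ref{prop:resolution-of-counterexample}) resting on the anatomy Lemmas \ref{lm:unweighted_anatomy} and \ref{lm:divisor_anatomy}. Your phrase about removing one prime at a time is loosely aligned with this induction, but none of its quantitative content --- the rescaled measures $\widetilde{\psi_{i,j}},\widetilde{\theta_{i,j}}$, the concentration of $(\nu_p(v),\nu_p(w))$ near a diagonal point, the combinatorial minimality trick, or the final contradiction via the condition $D_{\psi,\theta}(v,w)\leq 1$ and the divisor-sum bounds --- appears in your proposal. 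To turn your sketch into a proof you would need to state and prove (or correctly cite) a result of the strength of \cite[Proposition 5.4]{KM20}, or equivalently the main technical theorem of \cite{HVW24} together with the material of this paper.
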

Later work of Aistleitner, Borda and Hauke established a quantitative version of this result.
\begin{theorem}
    [Theorem 1, \cite{ABH23}]
    \label{thm:abh}
    Let $\psi:\N\to[0,1/2]$ be a function satisfying \eqref{eq:divergence_condition} and define 
    \begin{align}
        \Psi(Q) \defeq \sum_{q\leq Q} \frac{2\psi(q)\phi(q)}{q}.
    \end{align}
    Write $S(\alpha, Q)$ for the number of coprime solutions $(a,q)$ to \eqref{eq:rational_approx} with $q\leq Q$.
    Then for any $A>0$ and almost all $\alpha$,
    \begin{align}
        S(\alpha, Q) = \Psi(Q)\Big( 1 + O_{\alpha, A} ((\log \Psi (Q))^{-A}) \Big).
    \end{align}
\end{theorem}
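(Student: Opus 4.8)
\emph{Proof proposal.} The plan is to deduce the almost-everywhere asymptotic from a variance bound by a Borel--Cantelli argument along a suitably spaced sequence of scales, which is the standard quasi-independence strategy for problems of this kind. For $q\geq 1$ set
\[
A_q \defeq \bigcup_{\substack{0\leq a\leq q\\ (a,q)=1}}\Big[\tfrac{a}{q}-\tfrac{\psi(q)}{q},\ \tfrac{a}{q}+\tfrac{\psi(q)}{q}\Big]\cap[0,1],
\]
so that, using $\psi\leq 1/2$ to make the intervals about the fractions with a common denominator pairwise disjoint, $S(\alpha,Q)=\sum_{q\leq Q}\indicator{A_q}(\alpha)$ and $\mathbb E\,S(\cdot,Q)=\sum_{q\leq Q}\mu(A_q)=\Psi(Q)+O(1)$. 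Given the target exponent $A$, I would take for each $k$ the least $Q_k$ with $\Psi(Q_k)\geq \exp\!\big(k^{1/(A+1)}\big)$; since $\Psi$ is nondecreasing, tends to infinity, and has increments $O(1)$, this gives $\log\Psi(Q_k)=(1+o(1))\,k^{1/(A+1)}$ and $\Psi(Q_{k+1})/\Psi(Q_k)=1+O\big((\log\Psi(Q_k))^{-A}\big)$. The whole theorem then reduces to the variance bound
\[
\operatorname{Var}\big(S(\cdot,Q)\big)\ \ll_B\ \frac{\Psi(Q)^2}{(\log\Psi(Q))^{B}}\qquad\text{for every }B>0 :
\]
applying this with $B=3A+2$, Chebyshev's inequality makes $\mu\big(\{\abs{S(\cdot,Q_k)-\Psi(Q_k)}>\Psi(Q_k)(\log\Psi(Q_k))^{-A}\}\big)$ summable in $k$, so by Borel--Cantelli the stated asymptotic holds almost everywhere along the $Q_k$, and the monotonicity of $S(\alpha,\cdot)$ together with $\Psi(Q_{k+1})=(1+o(1))\Psi(Q_k)$ interpolates it to all $Q$ without worsening the error.

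To prove the variance bound I would expand the square to get $\operatorname{Var}\big(S(\cdot,Q)\big)=\sum_{q\neq r\leq Q}\big(\mu(A_q\cap A_r)-\mu(A_q)\mu(A_r)\big)+O(\Psi(Q))$, and, since $\sum_{q\neq r}\mu(A_q)\mu(A_r)=\Psi(Q)^2+O(\Psi(Q))$ and $\Psi(Q)=o\big(\Psi(Q)^2(\log\Psi(Q))^{-B}\big)$, reduce everything to the near-quasi-independence estimate
\[
\sum_{\substack{q,r\leq Q\\ q\neq r}}\mu(A_q\cap A_r)\ \leq\ \Psi(Q)^2+O_B\Big(\frac{\Psi(Q)^2}{(\log\Psi(Q))^{B}}\Big)\qquad\text{for every }B>0.
\]
This is the arithmetic core. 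Its input is the classical overlap estimate (Erd\H{o}s; Pollington--Vaughan; in the sharpened form used by Koukoulopoulos--Maynard), which bounds $\mu(A_q\cap A_r)$ by $\mu(A_q)\mu(A_r)$ times an arithmetic ``interference factor'' built from the common prime-power factors of $q$ and $r$, plus a lower-order term that is summed trivially; this factor is always $\gg 1$ but becomes large exactly when $q$ and $r$ share prime powers. I would then localize the denominators according to the anatomy of $q$ --- the dyadic ranges of $\psi(q)$ and of $q/\phi(q)$, and, via Erd\H{o}s--Kac-type concentration, of $\omega(q)$ --- so that on each piece the interference factor is essentially constant away from a sparse set of pairs. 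On the complement one simply recovers $\sum\mu(A_q\cap A_r)\leq(1+o(1))\sum\mu(A_q)\mu(A_r)$, and the contribution of the remaining ``heavily overlapping'' pairs is to be estimated by the Koukoulopoulos--Maynard argument, or, here, by its GCD-graph-free substitute.

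The step I expect to be the main obstacle is precisely this control of the heavily overlapping pairs. In the qualitative Koukoulopoulos--Maynard proof these pairs are only shown to contribute $O(\Psi(Q)^2)$, which is enough for $\mu(\limsup A_q)=1$ but far from enough here: one needs both that they contribute a full power of $\log\Psi(Q)$ \emph{less} than $\Psi(Q)^2$ and, simultaneously, that the leading constant of the off-diagonal sum be pinned to $1$ rather than to some $C>1$. Delivering this requires a genuinely quantitative treatment of the large-interference pairs --- the job the GCD-graph formalism was designed to do --- together with care that the splitting into $\asymp\log Q$ pieces and the ensuing cross-piece overlaps do not let the $1+o(1)$ losses accumulate into a true constant; reproducing exactly these two points by elementary combinatorial means is what the GCD-graph-free argument must accomplish.
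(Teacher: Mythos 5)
First, note that the paper does not prove Theorem \ref{thm:abh} at all: it is quoted as background from \cite{ABH23}, so there is no internal proof to compare against; the present paper's own contribution (Theorem \ref{thm:main}) is only one ingredient of the machinery behind such quantitative statements.

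Your proposal is an outline rather than a proof, and the gap is exactly where you flag it. The reduction you describe — defining the sets $A_q$, computing $\sum_{q\le Q}\mu(A_q)=\Psi(Q)+O(1)$, choosing scales $Q_k$ with $\Psi(Q_k)\approx\exp(k^{1/(A+1)})$, Chebyshev plus Borel--Cantelli, and interpolating via monotonicity of $S(\alpha,\cdot)$ — is standard and correct as far as it goes. But everything then rests on the variance bound $\operatorname{Var}(S(\cdot,Q))\ll_B \Psi(Q)^2(\log\Psi(Q))^{-B}$ for every $B$, equivalently the off-diagonal estimate $\sum_{q\ne r}\mu(A_q\cap A_r)\le \Psi(Q)^2+O_B(\Psi(Q)^2(\log\Psi(Q))^{-B})$ with leading constant exactly $1$, and this you do not prove: you defer it to ``the Koukoulopoulos--Maynard argument, or its GCD-graph-free substitute.'' That estimate is precisely the main technical content of \cite{ABH23} (their Proposition 7 together with the iterative second-moment analysis), and it is not a corollary of the qualitative result of \cite{KM20}, which only gives the heavily overlapping pairs a contribution $O(\Psi(Q)^2)$. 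Nor does it follow in one line from bounds of the shape of Theorem \ref{thm:main} or Proposition \ref{prop:kmy} here: even granted those, one still needs the full quantitative framework (the Pollington--Vaughan overlap estimate in its sharpened form, the anatomical decomposition of the denominators, and the mechanism that removes the structured ``bad'' pairs while keeping the constant pinned at $1+o(1)$) as carried out in \cite{ABH23} and \cite{KMY24}. So the core analytic/arithmetic work — the only part that distinguishes this theorem from a routine Borel--Cantelli exercise — is missing, and the proposal as written does not establish Theorem \ref{thm:abh}.
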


Both of their arguments rely heavily on the technical formalism of GCD graphs. However, in recent work of Hauke, Walker and the author \cite{HVW24}, a shorter proof of Theorem \ref{thm:km} was given which avoided the use of GCD graphs entirely. Instead, it follows a strategy based on the work of Green and Walker \cite{GW21}, introducing some conceptual simplifications,
to prove a generalization of the main technical step \cite[Proposition 5.4]{KM20} and \cite[Proposition 7]{ABH23}.
This method also improves the error term of Theorem \ref{thm:abh}, 
\begin{align}
    S(\alpha, Q) = \Psi(Q)\Big( 1+O_{\alpha, \epsilon}(\exp(-(\log \Psi(Q))^{\frac{1}{2} -\epsilon})) \Big).
\end{align}

Independently of the work in \cite{HVW24}, but simultaneously, Koukoulopoulos, Maynard and Yang \cite{KMY24} improved Theorem \ref{thm:abh}, obtaining an almost sharp bound.
\begin{theorem}
    [Theorem 1, \cite{KMY24}]
    \label{thm:kmy}
    Let $\psi, \Psi$ and $S$ be as in Theorem \ref{thm:abh}, then for any $\epsilon > 0$ and almost all $\alpha$, 
    \begin{align}
        S(\alpha, Q) = \Psi(Q) + O_ \epsilon(\Psi(Q)^{\frac{1}{2} + \epsilon})
    \end{align}
    for every $Q$ large enough in terms of $\alpha$ and $\psi$.
\end{theorem}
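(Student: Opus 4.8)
The plan is to follow the now-standard overlap/second-moment strategy, but feeding in the stronger arithmetic input of Koukoulopoulos--Maynard--Yang through the GCD-graph-free route of \cite{HVW24}. First, I would reduce to a dyadic range: it suffices to understand, for $Q$ running over powers of $2$, the sum $S(\alpha,2Q)-S(\alpha,Q)$ counting coprime solutions with $q\in(Q,2Q]$, and then telescope. Write $A_q$ for the union over reduced residues $a$ of the arcs in \eqref{eq:rational_approx}, so that $\mu(A_q)=2\psi(q)\phi(q)/q$ and $S(\alpha,Q)=\sum_{q\le Q}\indicator{\alpha\in A_q}$. The heart of the matter is a second-moment estimate for $\sum_{q\in(Q,2Q]}\bigl(\indicator{A_q}-\mu(A_q)\bigr)$, which expands into a diagonal term of size $\asymp\Psi(2Q)-\Psi(Q)$ plus off-diagonal overlap terms $\mu(A_q\cap A_r)-\mu(A_q)\mu(A_r)$ for $q\ne r$. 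The Duffin--Schaeffer extra divisor $\phi$ is exactly what makes these overlaps small in an $L^2$-averaged sense, and controlling them is the crux.

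The key technical step is therefore an almost-sharp bound for the off-diagonal correlation sum
\begin{align}
    \sum_{\substack{q,r\in(Q,2Q]\\ q\ne r}} \Bigl( \mu(A_q\cap A_r) - \mu(A_q)\mu(A_r) \Bigr) \ll_\epsilon \bigl(\Psi(2Q)-\Psi(Q)\bigr)^{1+\epsilon},
\end{align}
or more precisely the localized-in-$q$ version that after partial summation yields the $\Psi(Q)^{1/2+\epsilon}$ error. Here I would invoke the GCD-graph-free analogue of \cite[Proposition 5.4]{KM20}, \cite[Proposition 7]{ABH23} established in \cite{HVW24}, suitably sharpened to the quality needed for Theorem \ref{thm:kmy}: this is precisely the ``new elements of the proof of Koukoulopoulos--Maynard--Yang'' that the abstract promises to port over. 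Concretely, one writes $\mu(A_q\cap A_r)$ in terms of $\gcd(q,r)$ and the interaction of $\psi(q),\psi(r)$, groups pairs $(q,r)$ by $d=\gcd(q,r)$ and by the sizes of $\psi$, and runs the iterative ``compression/quality increment'' argument of Green--Walker as streamlined in \cite{HVW24} --- but now tracking constants carefully enough to lose only $\Psi^\epsilon$ rather than $(\log\Psi)^{-A}$ or $\exp(-(\log\Psi)^{1/2-\epsilon})$.

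With the second-moment bound in hand, the endgame is routine: by Chebyshev, for each dyadic $Q$ the deviation $|S(\alpha,2Q)-S(\alpha,Q)-(\Psi(2Q)-\Psi(Q))|$ exceeds $(\Psi(2Q)-\Psi(Q))^{1/2+\epsilon/2}(\Psi(2Q)-\Psi(Q))^{\epsilon/2}$ only on a set of measure $\ll(\Psi(2Q)-\Psi(Q))^{-\epsilon}$; a Borel--Cantelli argument along a sufficiently sparse subsequence of dyadic scales, combined with monotonicity of $S$ and $\Psi$ in $Q$ to fill the gaps, upgrades this to the almost-everywhere statement with the error $O_\epsilon(\Psi(Q)^{1/2+\epsilon})$ valid for all large $Q$. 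One technical wrinkle is that $\Psi(2Q)-\Psi(Q)$ need not grow, so the sparsification must be chosen adaptively (grouping consecutive dyadic blocks until the mass increment is comparable to a geometric progression), exactly as in \cite{ABH23,KMY24}; this is standard and I would follow their bookkeeping.

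I expect the main obstacle to be the sharpened overlap estimate: the Green--Walker/\cite{HVW24} machinery was designed to extract a $\log$-power or $\exp$-type saving, and pushing it to an almost power saving $\Psi^\epsilon$ requires re-examining every step where a quantity is bounded by ``something times a small gain'' and checking that iterating the gain $O_\epsilon(1)$ times suffices --- in particular handling the ``large $\gcd$'' and ``anatomy of $q$'' cases where $\psi$ is concentrated on integers with many prime factors, which is where the GCD-graph argument of \cite{KMY24} does its most delicate work. Making the non-graph argument match that quality, while keeping it self-contained, is the real content.
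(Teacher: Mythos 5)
Your scaffolding (dyadic decomposition, second-moment expansion, Chebyshev plus Borel--Cantelli with adaptive sparsification) matches the framework of \cite{KMY24}, which this paper does not reprove: it simply cites their reduction of Theorem \ref{thm:kmy} to Propositions 7.1, 7.2 and 7.3 of \cite{KMY24}, notes that the first two are handled without GCD graphs in \cite{HVW24}, and supplies the missing third ingredient via Theorem \ref{thm:main}. The genuine gap in your proposal is in the step you yourself identify as the crux: you propose to get the almost-sharp correlation bound by re-running the Green--Walker/\cite{HVW24} compression argument while ``tracking constants carefully enough to lose only $\Psi^{\epsilon}$''. That is not where the improvement comes from, and it would fail: the $L^2$-type input of \cite{KM20}, \cite{ABH23}, \cite{HVW24} enters the \cite{KMY24} proof essentially unchanged, and no amount of bookkeeping in that iteration converts an $\exp(-(\log\Psi)^{1/2-\epsilon})$-quality saving into a $\Psi(Q)^{1/2+\epsilon}$ error term.

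The passage to the almost-sharp bound rests on two new ingredients that your sketch never isolates, states, or proves. First, the improved overlap estimate of \cite[Proposition 5.1]{KMY24}, which sharpens the classical Pollington--Vaughan bound for $\mu(A_q\cap A_r)$ by a factor sensitive to the number of small primes dividing $qr/\gcd(q,r)^2$. Second, a new anatomical estimate (their Proposition 7.3, stated here as Proposition \ref{prop:kmy}) controlling the pairs $(q,r)$ with $D_{\psi,\psi}(q,r)\leq y$ and $\omega_t(q,r)\geq\kappa\log t$, i.e.\ exactly the pairs on which the improved overlap estimate gives no gain. The second ingredient is not a sharpening of the earlier $L^2$ machinery but a different statement about mass concentrated on integers with many small prime factors; giving it a GCD-graph-free proof is the entire content of the present paper, via the minimal-counterexample structure and resolution propositions (Propositions \ref{prop:structure-of-counterexample} and \ref{prop:resolution-of-counterexample}) together with the Rankin-trick anatomy lemmas of Section \ref{sec:anatomy_lemmas}. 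You acknowledge the ``anatomy of $q$'' cases as delicate, but leaving them at the level of ``re-examining every step where a quantity is bounded'' is precisely the part that cannot be waved through: without these two inputs explicitly formulated and established, your claimed off-diagonal bound $\ll_{\epsilon}(\Psi(2Q)-\Psi(Q))^{1+\epsilon}$ is unsupported, and the rest of the argument has nothing to feed on.
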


Their work again utilizes GCD graphs, together with an improved overlap estimate \cite[Proposition 5.1]{KMY24}.
They reduce the proof of Theorem \ref{thm:kmy} to three propositions: Propositions 7.1, 7.2 and 7.3 in \cite{KMY24}, which they prove using GCD graphs.
The first two correspond with the main technical step mentioned above, and are proved in \cite{HVW24} without GCD graphs (see the remark below for a small technicality). Proposition 7.3, on the other hand, concerns a different anatomic condition over the small prime factors, and is not considered in \cite{HVW24}.

The goal of this paper is to show that the argument in \cite{HVW24} can nonetheless be adapted to prove a version of \cite[Proposition 7.3]{KMY24}. 
In fact, we generalise this bound to a wide class of similar inequalities.
This, in combination with \cite{HVW24}, gives a shorter proof of Theorem \ref{thm:kmy} which avoids the complexity of GCD graphs entirely.

Throughout the rest of the paper, we write 
\begin{align}
    \omega_t(v,w) \defeq \big\lvert \big\{ p\leq t : p\divides \frac{vw}{\gcd(v,w)} \big\}\big\rvert
    \qquad \text{and} \qquad
    D_{\psi, \theta}(v,w) \defeq \frac{\operatorname{max}(w\psi(v),v\theta(w))}{\gcd(v,w)},
\end{align}
for any pair $v,w\in\N$, real $t\geq 1$ and any functions $\psi,\theta :\N\to\R_{\geq 0}$.
Let us now state the relevant proposition from \cite{KMY24}.

\begin{proposition}
    [Proposition 7.3 in Koukoulopoulos--Maynard--Yang
    \cite{KMY24}]
    \label{prop:kmy}
    Fix $\epsilon, \kappa > 0$ and $C\geq 1$. Let $\psi:\N\to\R_{\geq 0}$ and $y,t\geq 1$. We then have the uniform estimate 
    \begin{align}
        \mathop{\sum\sum}_{\substack{(q,r)\in [1,Q]^{2}\\ D_{\psi, \psi}(q,r)\leq y,\, \omega_t(q,r)\geq\kappa\log t}}\frac{\psi(q)\phi(q)}{q}\cdot \frac{\psi(r)\phi(r)}{r} \ll_{\epsilon, \kappa, C}t ^{-C}y^{1-\epsilon}\Psi(Q)^{1+\epsilon}.
    \end{align}
\end{proposition}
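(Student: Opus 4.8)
The plan is to deduce Proposition~\ref{prop:kmy} from the main technical double-sum estimate of \cite{HVW24}, using the anatomical condition on $\omega_t(q,r)$ to extract the power saving $t^{-C}$ through a Rankin-type exponential moment. First I would put the left-hand side into the shape handled in \cite{HVW24}: grouping the terms according to $d\defeq\gcd(q,r)$ and writing $q=dm$, $r=dn$ with $\gcd(m,n)=1$, the constraint $D_{\psi,\psi}(q,r)\leq y$ becomes $n\psi(dm)\leq y$ and $m\psi(dn)\leq y$, while $\omega_t(q,r)\geq\kappa\log t$ says precisely that the coprime parts $m$ and $n$ are, taken together, divisible by at least $\kappa\log t$ primes $p\leq t$. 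The guiding heuristic is that such a configuration is extremely rare --- a pair of integers below $Q$ has only about $\log\log Q$ prime factors --- so that, once the $D$-constraint has been used to bound the otherwise unrestricted sum by $\ll y^{1-\epsilon}\Psi(Q)^{1+\epsilon}$ (the range in which \cite{HVW24} operates), forcing $\kappa\log t$ further prime factors below $t$ into the picture should cost a factor $\ll_{\kappa,C}t^{-C}$; numerically this reflects $\big(\sum_{p\leq t}1/p\big)^{\kappa\log t}\big/(\kappa\log t)!\ll_{\kappa,C}t^{-C}$.

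To turn this into an argument I would use, for any real $\lambda\geq1$, the pointwise inequality
\begin{align}
    \indicator{\omega_t(q,r)\geq\kappa\log t}\leq\lambda^{\omega_t(q,r)-\kappa\log t}=t^{-\kappa\log\lambda}\,\lambda^{\omega_t(q,r)},
\end{align}
which reduces the proposition to bounding the $\lambda$-weighted double sum
\begin{align}
    S_\lambda\defeq\mathop{\sum\sum}_{\substack{(q,r)\in[1,Q]^2\\ D_{\psi,\psi}(q,r)\leq y}}\lambda^{\omega_t(q,r)}\,\frac{\psi(q)\phi(q)}{q}\cdot\frac{\psi(r)\phi(r)}{r}
\end{align}
and then choosing $\lambda$ large in terms of $\kappa$ and $C$. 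Since $\omega_t(q,r)$ counts the primes $p\leq t$ dividing the coprime product $mn$, the weight $\lambda^{\omega_t(q,r)}$ equals $g(mn)$ for a multiplicative $g$ which is trivial at primes $p>t$, and it should therefore be absorbable into the prime-by-prime argument of \cite{HVW24}: each prime $p\leq t$ is expected to contribute at most an additional factor $1+O_\lambda(1/p)$, whence $S_\lambda\ll_{\lambda,\epsilon}(\log t)^{O_\lambda(1)}\,y^{1-\epsilon}\Psi(Q)^{1+\epsilon}$. Combining with the Rankin factor gives a bound $\ll_{\lambda,\epsilon}t^{-\kappa\log\lambda}(\log t)^{O_\lambda(1)}y^{1-\epsilon}\Psi(Q)^{1+\epsilon}$; choosing $\lambda$ with $\kappa\log\lambda>C$ then beats $(\log t)^{O_\lambda(1)}$ once $t$ exceeds a threshold depending only on $\lambda,\epsilon,\kappa,C$, and for $t$ below that threshold the estimate follows directly from the unweighted bound of \cite{HVW24}.

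I expect the main obstacle to be the absorption step. The weight $\lambda^{\omega_t(q,r)}$ is not separately multiplicative in $q$ and $r$, and the naive route --- dominating it by a product of a weight in $q$ and a weight in $r$ and absorbing these into $\psi$ --- fails, since replacing $\psi(q)$ by $\lambda^{\#\{p\leq t:\,p\mid q\}}\psi(q)$ need not leave $\Psi(Q)$ essentially unchanged (the weight can concentrate on smooth $q$ on which $\psi$ is supported). One therefore has to re-enter the inductive compression of \cite{HVW24} rather than invoke it as a black box, and verify that imposing the divisibility $p\mid m$ (or $p\mid n$) for a participating prime genuinely costs a factor $O(1/p)$ in the \emph{coupled} double sum, uniformly in $\psi$ --- this is exactly the step where the constraint $D_{\psi,\psi}(q,r)\leq y$ linking the two variables is indispensable, and it is what the ``wide class of similar inequalities'' formulation is designed to encapsulate. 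Accordingly, I would prove a single weighted generalisation of the main proposition of \cite{HVW24}, valid for an arbitrary multiplicative weight that is bounded on the primes below $t$, and recover Proposition~\ref{prop:kmy} --- and the announced family of inequalities --- as the special case of the constant weight~$\lambda$.
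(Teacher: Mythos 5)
Your outer reduction is sound and is essentially the one the paper itself uses: Rankin's trick converts the condition $\omega_t(q,r)\geq\kappa\log t$ into a saving $t^{-\kappa\log\lambda}$ at the price of an exponential weight (in the paper this appears as the factor $e^{-CK}$ with $K=\kappa\log t$ in Theorem \ref{thm:main} and Corollary \ref{cor:main_application}), the Rankin parameter is then chosen large in terms of $C/\kappa$ (the paper takes $\widetilde C=2C/\kappa$), the $(\Log t)^{O(1)}$ loss is absorbed into the power of $t$ for large $t$ and trivially for bounded $t$, and the parameter $y$ is removed by rescaling $\psi$ to $y^{-1}\psi\cdot\indicator{n\leq Q}$ so that the constraint becomes $D\leq 1$. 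So the outer layer of your argument matches the paper's deduction of Proposition \ref{prop:kmy}.

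The genuine gap is that the entire substantive content --- your ``weighted generalisation of the main proposition of \cite{HVW24}'', i.e.\ the uniform bound $S_\lambda\ll_{\lambda,\epsilon}(\log t)^{O_\lambda(1)}y^{1-\epsilon}\Psi(Q)^{1+\epsilon}$ --- is asserted, not proved. You correctly observe that the weight $\lambda^{\omega_t(q,r)}$ is coupled in $q$ and $r$, so \cite{HVW24} cannot be invoked as a black box; but the statement that ``each prime $p\leq t$ contributes at most an additional factor $1+O_\lambda(1/p)$'' is precisely the point at issue, and making it rigorous is where all of the work of the present paper lies. Concretely, two things must be verified. First, in the minimal-counterexample compression (the analogue of Lemma \ref{lm:bilinear-upper-bound}): removing a prime $p\leq t$ from a pair with unequal $p$-adic valuations lowers $\omega_t$ by one, i.e.\ costs a factor $\lambda$ (the paper's $e^{C}$), and this loss must be carried into the off-diagonal decay lemma --- it is exactly why the leading constant becomes $100e^{C}$, why $p_0$ must depend on $\epsilon$ and $C$, and why the bilinear bound acquires the factor $e^{C\indicator{i\neq j}}$; none of this follows from the per-prime heuristic. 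Second, in the resolution step the weight does not enter through a prime-by-prime factorisation of the coupled sum at all, but through anatomy estimates in the style of Lemmas \ref{lm:unweighted_anatomy} and \ref{lm:divisor_anatomy} (Rankin plus Mertens applied to divisor sums of the structured modulus $N$ and to integer counts up to $v_0^{+}(w)$), which is what actually produces the $(\Log t)^{e^{O(C)}-1}$ factor. Your target weighted statement is, by summing over the threshold, equivalent to the paper's Theorem \ref{thm:main}; as written, though, the proposal reduces Proposition \ref{prop:kmy} to an unproven theorem whose proof constitutes the body of the paper.
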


\textbf{Remark.}
There is a slight issue with the current statement of \cite[Proposition 7.3]{KMY24}, as it may fail when $\Psi(Q)<1$ and $\epsilon$ tends to infinity. This can be fixed by requiring $\epsilon$ to be small, as both the proof and the applications work when $\epsilon$ is small enough.

Before we state our main result, we define the notation introduced in \cite{HVW24} for the weight associated with certain sets of integers and pairs of integers.
\begin{definition}
    [Measures]
    \label{def:measures}
    Let $\psi, \theta: \N \to \R_{\geq 0}$ be finitely supported, and let $f, g: \N \to \R_{\geq 0}$ be multiplicative functions.
    For $v \in \N$ we define 
    \begin{align}
        \mu_\psi^{f}(v) \defeq \frac{f(v)\psi(v)}{v}.
    \end{align}
    When $V\subset \N$ we define 
    \begin{align}
        \mu_{\psi}^{f}(V) \defeq \sum_{v\in V}\mu_\psi^{f}(v)
        = \sum_{v\in V} \frac{f(v)\psi(v)}{v}.
    \end{align}
    If $\mathcal E \subset \N\times\N$, we define 
    \begin{align}
        \mu_{\psi, \theta}^{f,g}(\mathcal E) \defeq \sum_{(v,w) \in \mathcal E}\mu_\psi^{f}(v)\mu_\theta^{g}(w).
    \end{align}
\end{definition}
We also introduce a special set of pairs $(v,w)$ of integers with small $D_{\psi,\theta}(v,w)$ and large $\omega_t(v,w)$.
\begin{definition}
    Let $\psi, \theta : \N \to \R_{\geq 0}$ be finitely supported, with $V_{\psi} = \supp\psi$ and $W_{\theta} = \supp\theta$, and let $t\geq 1$ and $K \in \R$. Then we define 
    \begin{align}
        \mathcal E_{\psi, \theta}^{t, K} \defeq \{ (v,w) \in V_\psi\times W_\theta: D_{\psi,\theta}(v,w) \leq 1, \omega_t(v,w) \geq K \}.
    \end{align}
\end{definition}

Finally, throughout this paper, we use the notation
\begin{align}
    \Log t \defeq \max\{ 1, \log t \}
\end{align}
for $t\geq 1$.

We are now ready to state our main result.

\begin{theorem}
    \label{thm:main}
    Let $\epsilon \in (0, 2/5]$ and $C > 0$. Then there exists $p_0(\epsilon, C)>0$ such that the following holds. Let $\psi, \theta: \N\to\R_{\geq0}$ be finitely supported, $V_{\psi} = \supp\psi$, $W_\theta = \supp\theta$ and 
    \begin{align}
        \mathcal P_{\psi,\theta} \defeq \{ p: \exists(v,w)\in V_{\psi}\times W_{\theta} \st p\divides vw \}.
    \end{align}
    Let $P_{\psi, \theta} \defeq p_0(\epsilon, C) + \lvert \mathcal P_{\psi, \theta}\cap [1, p_0(\epsilon, C)]\rvert$.
    Let $f, g:\N\to\R_{\geq 0}$ be multiplicative functions for which 
    \begin{align}
        \label{eq:f-condition}
        (1\star f)(n)\leq n, \qquad \text{and} \qquad (1\star g)(n)\leq n \qquad (\text{for all $n\geq 1$}).
    \end{align}
    Suppose that $\mathcal E \subset \mathcal E_{\psi,\theta}^{t, K}\cap(V\times W)$. Then for all $t\geq 1$ and any $K\in \R$ we have
    \begin{align}\label{eq:main-thm-bound}
        \mu_{\psi,\theta}^{f,g}(\mathcal E) \leq (100e^{C}) ^{P_{\psi,\theta}(\epsilon, C)}(\Log t)^{\frac{1}{2}(e^{40C}-1)}(\mu_{\psi}^{f}(V)\mu_{\theta}^{g}(W)e^{-CK})^{\frac{1}{2}+\epsilon}.
    \end{align}
\end{theorem}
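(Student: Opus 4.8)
The plan is to reduce the estimate on $\mu_{\psi,\theta}^{f,g}(\mathcal E)$ to a purely combinatorial statement about the prime factorisations of the pairs $(v,w)$, following the strategy of \cite{HVW24}. First I would partition the primes into the ``small'' primes $p \leq p_0(\epsilon,C)$ and the ``large'' primes $p > p_0(\epsilon,C)$; the factor $(100e^C)^{P_{\psi,\theta}}$ is the price paid for handling the small primes by brute force (each small prime in $\mathcal P_{\psi,\theta}$ contributes a bounded multiplicative loss), so the real work is among the large primes, where $p > p_0$ is large enough to absorb the implied constants in subsequent estimates. The condition $\omega_t(v,w) \geq K$ means that the squarefree kernel of $vw/\gcd(v,w)$ has at least $K$ prime factors up to $t$; the condition $D_{\psi,\theta}(v,w) \leq 1$ says $w\psi(v) \leq \gcd(v,w)$ and $v\theta(w) \leq \gcd(v,w)$.

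Next I would dyadically decompose over the value of $g = \gcd(v,w)$ and write $v = g v'$, $w = g w'$ with $(v',w')$ not necessarily coprime but controlled. The key device from \cite{HVW24} is a ``compression'' or weighted counting argument: bound $\mu_\psi^f(v)\mu_\theta^g(w)$ using $(1\star f)(n) \leq n$ and $(1\star g)(n) \leq n$, which lets one pass from the multiplicative weights $f(v)/v$, $g(w)/w$ to something comparable to $\indicator{}$-type sums at the cost of divisor functions. Then, for the large primes $p \mid vw/\gcd(v,w)$, I would run the Green--Walker-style iteration: introduce an auxiliary exponential weight $\prod_{p \mid vw/g,\, p > p_0} (1 + \lambda)$ or similar, interchange summation, and at each prime $p$ exploit the fact that $p \mid v'$ or $p \mid w'$ forces a saving of roughly $1/p$ in the relevant sum, while the constraint $D_{\psi,\theta}\leq 1$ prevents $v'$ and $w'$ from both being large. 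Optimising the exponential parameter turns the $\omega_t \geq K$ constraint into the gain $e^{-CK}$, and the exponent $\tfrac12 + \epsilon$ comes from a Cauchy--Schwarz (or Hölder with parameter $\tfrac{1}{1/2+\epsilon}$) splitting the pair sum into a $v$-sum and a $w$-sum, each bounded by $\mu_\psi^f(V)$, $\mu_\theta^g(W)$ respectively. The stray factor $(\Log t)^{\frac12(e^{40C}-1)}$ should emerge from the iteration: summing the geometric-type series $\sum_{p \leq t} \frac{c}{p} \approx c \log\log t$ in the exponent, exponentiated, gives a power of $\Log t$ with exponent controlled by $e^{O(C)}$.

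The main obstacle I expect is the interplay between the $\gcd$ and the Hölder split. Because $v$ and $w$ share the factor $g = \gcd(v,w)$, the pair sum does not factor cleanly; one must either carry $g$ through the entire argument as a parameter and sum over it at the end, or design the compression so that the $g$-dependence is absorbed into the weights $\mu_\psi^f$, $\mu_\theta^g$ harmlessly. A second delicate point is making the large-prime iteration uniform in $t$ and $K$ with no dependence beyond $\epsilon, C$: the choice of $p_0(\epsilon,C)$ must be large enough that, for $p > p_0$, the per-prime loss $(1 + O(1/p))$ in the Euler product stays below $e^{C}$ (or whatever threshold the bookkeeping requires), which is what forces $p_0$ to depend on $C$. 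I would also need to check the edge case $\Psi$-type triviality: when $K$ is very negative or $t$ is small ($\Log t = 1$), the bound should reduce to a trivial estimate $\mu_{\psi,\theta}^{f,g}(\mathcal E) \leq (\mu_\psi^f(V)\mu_\theta^g(W))^{1/2+\epsilon}$-ish inequality that follows directly from Cauchy--Schwarz and \eqref{eq:f-condition}, so the argument must be set up to degrade gracefully there.
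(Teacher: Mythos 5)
Your outline correctly identifies the peripheral ingredients---the Rankin-type exponential weight over the primes $p\leq t$, which together with Mertens' estimate produces the factors $e^{-CK}(\Log t)^{e^{O(C)}-1}$ (this is exactly what the anatomy lemmas, Lemmas \ref{lm:unweighted_anatomy} and \ref{lm:divisor_anatomy}, deliver)---but it does not supply the actual engine of the proof, namely the mechanism for beating the trivial exponent $1$ and reaching $\tfrac12+\epsilon$. A dyadic decomposition over $\gcd(v,w)$ followed by a per-prime iteration and a Cauchy--Schwarz/H\"older split does not work as stated: H\"older applied directly to $\sum_{(v,w)\in\mathcal E}\mu_\psi^f(v)\mu_\theta^g(w)$ only gives the trivial bound $\mu_\psi^f(V)\mu_\theta^g(W)$, and the heuristic that each prime $p\mid vw/\gcd(v,w)$ contributes a saving of about $1/p$ cannot simply be multiplied over primes: the enemy is many pairs sharing large gcds, which is precisely the obstruction that forced GCD graphs in \cite{KM20,KMY24} and the compression argument of \cite{HVW24}. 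The paper instead argues by contradiction with a counterexample minimising $\lvert\mathcal P_{\psi,\theta}\rvert$: removing a single prime $p$ and rescaling $\psi,\theta$ produces an instance where the theorem already holds, which yields a bilinear bound of the shape $m(i,j)\leq (100e^{C})^{-\indicator{p\leq p_0}}p^{-\lvert i-j\rvert/q}(\alpha_i\beta_j e^{C\indicator{i\neq j}})^{1/q'}$ on the joint distribution of $p$-adic valuations (Lemma \ref{lm:bilinear-upper-bound}); the decay lemma from \cite{HVW24} then forces these valuations to concentrate at a diagonal point $k_p$ and shows that no prime $p\leq p_0(\epsilon,C)$ can occur in a minimal counterexample. (So the factor $(100e^{C})^{P_{\psi,\theta}}$ is head-room for this induction, not a brute-force loss per small prime, as you suggest.) One then extracts a subset $\mathcal E'$ which is still a near counterexample, is combinatorially structured, and---crucially---is arithmetically structured: all pairs have the form $(Nv^{+}/v^{-},\,Nw^{+}/w^{-})$, squarefree coprime perturbations of a single modulus $N$.

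Only with that structure in hand does the step you attribute to Cauchy--Schwarz go through: property (\ref{itm:combinatorially-structured}) of Proposition \ref{prop:structure-of-counterexample} is applied twice to compare $\mu_{\psi,\theta}^{f,g}(\mathcal E')^{2}/(\mu_\psi^f(V')\mu_\theta^g(W'))$ with a double sum over a single heavy neighbourhood, and the arithmetic structure combined with $D_{\psi,\theta}(v,w)\leq1$ gives $\psi(v)\leq 1/(v^{-}w_0^{+})$ and $\theta(w)\leq 1/(v_0^{+}(w)w^{-})$, which makes that double sum collapse, via \eqref{eq:f-condition} and the anatomy lemmas, to $O_C(e^{-10CK}(\Log t)^{e^{40C}-1})$; a short final case analysis upgrades the exponent $\tfrac12$ to $\tfrac12+\epsilon$. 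Without the minimal-counterexample induction and the diagonal-concentration step that manufactures the modulus $N$, your iteration has no control over pairs with large shared gcds, so the proposal as written has a genuine gap at its core.
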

The above theorem relates back to the Duffin--Schaeffer situation when $f = g = \phi$.

\begin{corollary}
    \label{cor:main_application}
    Let $\epsilon \in (0,4/5]$, $C >0$, $t\geq 1$ and $K\in\R$. Let also $\psi:\N\to\R_{\geq 0}$ be finitely supported with $V =\supp \psi$. Then we have 
    \begin{align}
        \mu_{\psi,\psi}^{\phi,\phi}(\mathcal E_{\psi,\psi}^{t, K}) \ll_{\epsilon, C} (\Log t)^{\frac{1}{2}(e^{80C}-1)}e^{-CK(1+\epsilon)}\mu_{\psi}^{\phi}(V)^{1+\epsilon}.
    \end{align}
\end{corollary}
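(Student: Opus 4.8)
The plan is to obtain Corollary~\ref{cor:main_application} as a direct specialisation of Theorem~\ref{thm:main}. Given the corollary's data $\epsilon\in(0,4/5]$, $C>0$, $t\geq 1$, $K\in\R$ and a finitely supported $\psi$ with $V=\supp\psi$, I would invoke Theorem~\ref{thm:main} with the choices $\theta\defeq\psi$, $f\defeq\phi$, $g\defeq\phi$, $W\defeq\supp\psi=V$, $\mathcal E\defeq\mathcal E_{\psi,\psi}^{t,K}$, and with the theorem's two parameters set equal to $\epsilon/2$ and $2C$ (rather than $\epsilon$ and $C$). The point of the rescaling is that passing from $C$ to $2C$ turns the factor $(e^{-CK})^{\frac12+\epsilon}$ of \eqref{eq:main-thm-bound} into the $e^{-CK(1+\epsilon)}$ of the corollary, and simultaneously turns $(\Log t)^{\frac12(e^{40C}-1)}$ into $(\Log t)^{\frac12(e^{80C}-1)}$; halving $\epsilon$ keeps the theorem's parameter in its admissible range $(0,2/5]$ while producing the final exponent $1+\epsilon$.

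Before substituting, I would check the hypotheses. Condition \eqref{eq:f-condition} holds for $f=g=\phi$ because $(1\star\phi)(n)=\sum_{d\mid n}\phi(d)=n$ for every $n\geq 1$ (so in fact with equality). The function $\psi$ — hence $\theta=\psi$ — is finitely supported. For the set, note that by definition $\mathcal E_{\psi,\psi}^{t,K}\subseteq V_\psi\times W_\psi=V\times W$, so $\mathcal E_{\psi,\psi}^{t,K}\cap(V\times W)=\mathcal E_{\psi,\psi}^{t,K}=\mathcal E$ and the required inclusion $\mathcal E\subset\mathcal E_{\psi,\psi}^{t,K}\cap(V\times W)$ is even an equality; the ranges $t\geq1$ and $K\in\R$ are unconstrained. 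Finally $\epsilon/2\in(0,2/5]$ exactly because $\epsilon\in(0,4/5]$.

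Then \eqref{eq:main-thm-bound}, applied with these choices and using $\mu_\psi^\phi(V)=\mu_\theta^g(W)$, reads
\begin{align}
    \mu_{\psi,\psi}^{\phi,\phi}(\mathcal E_{\psi,\psi}^{t,K})
    \leq (100e^{2C})^{P_{\psi,\psi}}\,(\Log t)^{\frac12(e^{40\cdot 2C}-1)}\big(\mu_\psi^\phi(V)^2\,e^{-2CK}\big)^{\frac12+\frac\epsilon2},
\end{align}
and expanding the last factor gives $\mu_\psi^\phi(V)^{1+\epsilon}e^{-2CK(\frac12+\frac\epsilon2)}=\mu_\psi^\phi(V)^{1+\epsilon}e^{-CK(1+\epsilon)}$, with $(\Log t)$-exponent $\frac12(e^{80C}-1)$, as required. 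It only remains to absorb the combinatorial prefactor into the implied constant: $P_{\psi,\psi}=p_0(\epsilon/2,2C)+\lvert\mathcal P_{\psi,\psi}\cap[1,p_0(\epsilon/2,2C)]\rvert\leq 2p_0(\epsilon/2,2C)$, since $\mathcal P_{\psi,\psi}$ is a set of primes and $[1,x]$ contains at most $x$ of them, so $(100e^{2C})^{P_{\psi,\psi}}\leq(100e^{2C})^{2p_0(\epsilon/2,2C)}$, a quantity depending on $\epsilon$ and $C$ alone. Hiding it in $\ll_{\epsilon,C}$ finishes the argument.

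There is no serious obstacle here: all the analytic work is already carried out inside Theorem~\ref{thm:main}. The only genuinely necessary observations are that $\phi$ satisfies the Dirichlet-convolution bound with equality, that the substitution $(\epsilon,C)\mapsto(\epsilon/2,2C)$ both respects the admissible parameter range and reproduces the exact exponents of the corollary, and that the prefactor $(100e^{C})^{P_{\psi,\theta}}$ in the theorem is bounded purely in terms of $\epsilon$ and $C$ — which holds because $P_{\psi,\theta}\leq 2p_0(\epsilon,C)$ no matter what $\psi,\theta$ are — so that it can legitimately be swallowed by the $\ll_{\epsilon,C}$ of the corollary.
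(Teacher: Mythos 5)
Your proposal is correct and follows essentially the same route as the paper: a direct specialisation of Theorem \ref{thm:main} with $\theta=\psi$, $f=g=\phi$ (noting $(1\star\phi)(n)=n$), the set $\mathcal E=\mathcal E_{\psi,\psi}^{t,K}$, and the prefactor $(100e^{2C})^{P_{\psi,\psi}}$ absorbed into $\ll_{\epsilon,C}$ since $P_{\psi,\psi}\leq 2p_0$. If anything you are more explicit than the paper, whose one-line proof mentions only the replacement $\epsilon\mapsto\epsilon/2$, while the accompanying substitution $C\mapsto 2C$ — which is what produces the exponents $\frac12(e^{80C}-1)$ and $e^{-CK(1+\epsilon)}$ — is left implicit there but spelled out correctly in your argument.
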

\begin{proof}
    Observe that $\phi$ satisfies \eqref{eq:f-condition}. Also, $P_{\psi,\theta}(\epsilon, C)$ can be bounded above in terms of $p_0(\epsilon, C)$, which is independent of $\psi$ and $\theta$. The result now follows immediately from Theorem \ref{thm:main}, where we use $\epsilon/2$ in place of $\epsilon$.
\end{proof}

\textbf{Remarks.}
\begin{enumerate}
    \item In the statement of Theorem \ref{thm:main}, the upper bound $2/5$ in the range of $\epsilon$ can be replaced by any constant below $1/2$ at the cost of substituting the 100 by a larger constant. Moreover, it is possible to get the 40 in the exponent of $\Log t$ arbitrarily close to $4$, provided that $\epsilon$ is small enough.
    \item The quantity $P_{\psi,\theta}(\epsilon, C)$ is introduced to make the later inductive argument work. In practice, one may substitute the leading factor by an implicit constant depending on $\epsilon$ and $C$, as in Corollary \ref{cor:main_application}.
    \item Theorem \ref{thm:main} introduces an arbitrary constant $C$, which was not present in the previous work \cite[Theorem 1.7]{HVW24}. However, Propositions 7.1, 7.2 and 7.3 from \cite{KMY24} do include the same arbitrary constant. The presence of this constant in our main theorem introduces two minor technical changes with respect to the argument in \cite{HVW24}. The first is a slightly more complicated leading factor that accounts for the $C$ dependency. The second is the introduction of a similar arbitrary constant in the anatomy lemmas from Section \ref{sec:anatomy_lemmas}. This is achieved by not making any concrete choices of constants, such as the 100 in Lemmas 4.1 and 4.2 from \cite{HVW24}. The treatment of both of these changes in the present paper transfers immediately to \cite{HVW24}, thus proving a version of the main technical theorem where the same arbitrary constant is present.
\end{enumerate}

\begin{proof}
    [Proof of Proposition \ref{prop:kmy} from Theorem \ref{thm:main} for small $\epsilon$]

    Let $\epsilon, \kappa, C, \psi, y$ and $t$ be as in the statement of Proposition \ref{prop:kmy}.
    Observe that Theorem \ref{thm:main} only applies to the sum of Proposition \ref{prop:kmy} when $y = 1$. We can get around this issue by rescaling the function $\psi$. We define 
    \begin{align}
        \widetilde\psi(n) \defeq y^{-1}\psi(n)\cdot \indicator{n\leq Q}(n).
    \end{align}
    Proposition \ref{prop:kmy} is then equivalent to 
    \begin{align}
        \label{eq:kmy-equivalent}
        \mu_{\widetilde\psi,\widetilde\psi}^{\phi, \phi}( \mathcal E_{\widetilde\psi, \widetilde\psi}^{t,\kappa\log t} )\ll_{\epsilon, \kappa, C} t ^{-C}\mu_{\widetilde\psi}^{\phi}([Q])^{1+\epsilon},
    \end{align}
    where $[Q]$ denotes the set of positive integers $n$ with $n\leq Q$.

    We assume that $\epsilon\leq 4/5$ (see remark above).
    Let $\widetilde C >0$ be a positive real constant to be fixed later.
    Then by Corollary \ref{cor:main_application} with $\widetilde C$ in place of $C$ and $\widetilde \psi$ in place of $\psi$,
    \begin{align}
        \mu_{\widetilde\psi,\widetilde\psi}^{\phi, \phi}(\mathcal E_{\widetilde\psi,\widetilde\psi}^{t, \kappa\log t})
        &\ll_{\epsilon, \widetilde C} t ^{-\widetilde C\kappa(1+\epsilon)}(\Log t)^{\frac{1}{2}(e^{80\widetilde C}-1)}
        \mu_{\widetilde\psi}^{\phi}([Q])^{1+\epsilon}\\
        &\ll_{\epsilon,C, \widetilde C} t^{-\widetilde C\kappa +C
        }
        \mu_{\widetilde\psi}^{\phi}([Q])^{1+\epsilon}.
    \end{align}
    The bound \eqref{eq:kmy-equivalent} now follows by setting $\widetilde C = \frac{2C}{\kappa}$.
\end{proof} 

The remainder of this paper is concerned with the proof of Theorem \ref{thm:main}.
In Section \ref{sec:proof_strategy}, we lay down the strategy of the proof, reducing it to two propositions (Proposition \ref{prop:structure-of-counterexample} and Proposition \ref{prop:resolution-of-counterexample}).
Section \ref{sec:proof_of_structure} is devoted to the proof of Proposition \ref{prop:structure-of-counterexample}.
In Section \ref{sec:anatomy_lemmas}, we prove two auxiliary lemmas concerned with the anatomic condition of $\omega_t(v,w)$ being large. In Section \ref{sec:proof_of_resolution}, we deduce Proposition \ref{prop:resolution-of-counterexample} from the anatomy lemmas of the previous section, thus completing the proof of Theorem \ref{thm:main}.

\textbf{Acknowledgements.}
The author thanks Aled Walker for invaluable guidance and comments on an earlier version of the paper.
This work was supported by the Engineering and Physical Sciences Research Council [EP/S021590/1]. 
The EPSRC Centre for Doctoral Training in Geometry and Number Theory (The London School of Geometry and Number Theory), University College London.

\textbf{Notation.} We use the standard $O$-notation as well as the Vinogradov notations $\ll, \gg$.
For a set $\mathcal E \subset V\times W$ of pairs and an element $v\in V$, we write $\Gamma_{\mathcal E}(v)\defeq \{ w\in W: (v,w)\in \mathcal E \}$ and $\mathcal E\vert_{V} \defeq \{ v\in V: \Gamma_{\mathcal E}(v)\neq \emptyset \}$.
For $w\in W$, $\Gamma_{\mathcal E}(w)$ and $\mathcal E\vert_{W}$ are defined analogously.
The set of positive integers not exceeding $N$ is denoted by $[N]$.
For a prime $p$, we denote by $\nu_p$ the usual $p$-adic valuation over the rationals. Given multiplicative functions $f,g$ we write $(f\star g)(n)\defeq \sum_{ab = n}f(a)g(b)$ for their Dirichlet convolution.
When $\epsilon \in (0,1/2)$ is given, we will write $q\defeq \frac{2}{1-2\epsilon}$ and $q'\defeq \frac{2}{1+2\epsilon}$. We write $\Log t \defeq \max\{ 1, \log t \}$ for $t\geq 1$.

\section{Proof strategy}
\label{sec:proof_strategy}
The proof of Theorem \ref{thm:main} will closely follow the strategy laid out in \cite{HVW24}, which naturally splits into the two propositions below. The first part of the argument considers a potential counterexample, subject to some minimality condition, and finds that it must contain a large and highly structured subset.
For the second part, we prove that such a structured set cannot be of the size obtained in the first part.

\begin{proposition}
    [Structure of minimal potential counterexample]
    \label{prop:structure-of-counterexample}
    Suppose Theorem \ref{thm:main} were false. Fix functions $\psi, \theta: \N\to\R_{\geq 0}$ with finite support such that $\lvert\mathcal P_{\psi,\theta}\rvert$ is minimal over all such pairs of functions for which there exist instances of $\epsilon, C, t, K, f, g$, and $\mathcal E\subset \mathcal E_{\psi,\theta}^{t, K}$ satisfying the hypotheses of Theorem \ref{thm:main} for which \eqref{eq:main-thm-bound} fails. Fix such instances of $\epsilon, C, t, K, f, g,$ and $\mathcal E$. Write $q' \defeq \frac{2}{1+2 \epsilon}$. Then for $p_0(\epsilon, C)$ large enough in terms only of $\epsilon$ and $C$, there exists $\mathcal E'\subset \mathcal E$ for which

    \begin{enumerate}
        \item\label{itm:is-near-counterexample}
            \emph{($\mathcal E'$ is a near counterexample)}: 
            \begin{align}
                \mu_{\psi,\theta}^{f,g}(\mathcal E') > \frac{1}{2} (100e^{C}) ^{P_{\psi,\theta}(\epsilon, C)}(\Log t)^{\frac{1}{2}(e^{40C}-1)}(\mu_{\psi}^{f}(V')\mu_{\theta}^{g}(W')e^{-CK})^{\frac{1}{q'}}
            \end{align}
            where $V'\defeq \mathcal E'\rvert_{V}, W'\defeq \mathcal E'\rvert_{W}$.
        \item\label{itm:combinatorially-structured}
            \emph{($\mathcal E'$ is combinatorially structured)}: for all $v\in V'$ and $w\in W'$, 
            \begin{align}
                \mu_{\theta}^{g}(\Gamma_{\mathcal E'}(v))\geq \frac{1}{q'}\frac{\mu_{\psi,\theta}^{f,g}(\mathcal E')}{\mu_{\psi}^{f}(V')}
                \qquad\text{and}\qquad
                \mu_{\psi}^{f}(\Gamma_{\mathcal E'}(w))\geq \frac{1}{q'}\frac{\mu_{\psi,\theta}^{f,g}(\mathcal E')}{\mu_{\theta}^{g}(W')}.
            \end{align}
        \item\label{itm:arithmetically-structured}
\emph{($\mathcal E'$ is arithmetically structured)}: there exists $N\in\N$ such that for all primes $p$ and for all $(v,w)\in\mathcal E'$, $\lvert\nu_p(v/N)\rvert+\lvert\nu_p(w/N)\rvert \leq 1$.
    \end{enumerate}
\end{proposition}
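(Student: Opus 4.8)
The plan is to run the two‑stage strategy of \cite{HVW24}: first exploit the minimality of $\lvert\mathcal P_{\psi,\theta}\rvert$ to pass to an arithmetically structured subset of $\mathcal E$ that is still (almost) a counterexample, and then perform a combinatorial maximisation on that subset to recover the density and degree conditions.

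Write $q'=\frac2{1+2\epsilon}$, so that $\frac1{q'}=\frac12+\epsilon$, and set $\Phi(\mathcal F)\defeq\mu_{\psi,\theta}^{f,g}(\mathcal F)/\big(\mu_\psi^f(\mathcal F\rvert_V)\mu_\theta^g(\mathcal F\rvert_W)\big)^{1/q'}$ for nonempty $\mathcal F\subset\mathcal E$. Since $\mathcal E\rvert_V\subset V$ and $\mathcal E\rvert_W\subset W$, the failure of \eqref{eq:main-thm-bound} for $\mathcal E$ says exactly that $\Phi(\mathcal E)>\Lambda\defeq(100e^{C})^{P_{\psi,\theta}}(\Log t)^{\frac12(e^{40C}-1)}e^{-CK/q'}$. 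My first reduction is that it suffices to find an arithmetically structured $\mathcal F\subset\mathcal E$ with $\Phi(\mathcal F)>\tfrac12\Lambda$: taking $\mathcal E'\subset\mathcal F$ nonempty with $\Phi(\mathcal E')$ maximal, item \ref{itm:arithmetically-structured} is inherited by $\mathcal E'$ (a subset of an arithmetically structured set is arithmetically structured, with the same $N$), item \ref{itm:is-near-counterexample} follows from $\Phi(\mathcal E')\ge\Phi(\mathcal F)>\tfrac12\Lambda$, and item \ref{itm:combinatorially-structured} is the usual one‑vertex perturbation: if some $v\in\mathcal E'\rvert_V$ violated the asserted bound for $\mu_\theta^g(\Gamma_{\mathcal E'}(v))$, then deleting $v$ from $\mathcal E'$ — which lowers $\mu_\psi^f(\cdot\rvert_V)$ by $\mu_\psi^f(v)>0$, does not raise $\mu_\theta^g(\cdot\rvert_W)$, and lowers $\mu_{\psi,\theta}^{f,g}$ by exactly $\mu_\psi^f(v)\mu_\theta^g(\Gamma_{\mathcal E'}(v))$ — would strictly increase $\Phi$ by the elementary inequality $(1-x)^{1/q'}\le1-x/q'$ for $x\in[0,1]$ (valid as $\tfrac1{q'}<1$), contradicting maximality; and symmetrically for $w\in\mathcal E'\rvert_W$, the degenerate cases (where deleting a vertex would empty one side) being immediate from $q'\ge1$.

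For the arithmetic structuring I would treat the primes of $\mathcal P_{\psi,\theta}$ one at a time; at a prime $p$, split the current set by the $p$‑adic profile $(\nu_p(v),\nu_p(w))$ of its pairs and cut each profile down to a width‑one window about a common exponent $\nu_p(N)$. The engine is a substitution: restricting $\mathcal E$ to pairs of profile $(a,b)$ and sending $(v,w)\mapsto(v/p^{a},w/p^{b})$ yields $p$‑free integers with new finitely supported weights $\psi',\theta'$ obeying $\mu_{\psi'}^f(v/p^{a})=(p^{a}/f(p^{a}))\mu_\psi^f(v)$ and $\mu_{\theta'}^g(w/p^{b})=(p^{b}/g(p^{b}))\mu_\theta^g(w)$; one checks $D_{\psi',\theta'}\le1$ is inherited and $\omega_t$ drops by at most $1$, while $\mathcal P_{\psi',\theta'}\subsetneq\mathcal P_{\psi,\theta}$, so Theorem \ref{thm:main} holds for this reduced data by minimality. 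Pulling the bound back, and using $f(p^{a})\le(1\star f)(p^{a})\le p^{a}$ together with $\tfrac12-\epsilon\ge0$, bounds the $\mu_{\psi,\theta}^{f,g}$‑mass of the profile $(a,b)$ by $O_C(1)\,(100e^{C})^{P'}(\Log t)^{\frac12(e^{40C}-1)}\big(\mu_\psi^f(V_{a,b})\mu_\theta^g(W_{a,b})e^{-CK}\big)^{1/2+\epsilon}$, with $P'\le P_{\psi,\theta}-\mathbf 1_{p\le p_0}$. For a fixed offset $b-a$ the sets $V_{a,b}$ (resp.\ $W_{a,b}$) with distinct $a$ have distinct $p$‑valuations, hence are disjoint, so $\sum_a\mu_\psi^f(V_{a,b})\le\mu_\psi^f(V)$ and likewise for $W$; together with Cauchy--Schwarz and the superadditivity $\sum_a x_a^{1/2+\epsilon}\le(\sum_a x_a)^{1/2+\epsilon}$ this controls the whole offset‑class by a bounded multiple of $(\mu_\psi^f(V)\mu_\theta^g(W)e^{-CK})^{1/2+\epsilon}$. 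Finally, $D_{\psi,\theta}\le1$ also forces $\psi(v)\le p^{-|b-a|}$ or $\theta(w)\le p^{-|b-a|}$ on the offset‑$(b-a)$ class, which is what lets one arrange that discarding the large‑offset classes and pinning the level to a window costs only $1+O(1/p^{2})$ at primes $p>p_0$ and $O_C(1)$ — absorbed by the $(100e^{C})^{-1}$ saving — at primes $p\le p_0$; with $p_0(\epsilon,C)$ large enough the total $\Phi$‑loss then stays below $2$, giving $\Phi(\mathcal F)\ge\tfrac12\Phi(\mathcal E)>\tfrac12\Lambda$.

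The main obstacle is this arithmetic‑structuring step: while $D_{\psi,\theta}\le1$ directly pins the offset $|\nu_p(v)-\nu_p(w)|$, it says nothing about the absolute $p$‑adic level, so extracting a single level (to within a bounded window) has to be squeezed out of the minimality bookkeeping, and the delicate points are keeping the per‑prime losses summable (geometric at large $p$, paid for by the drop in $P_{\psi,\theta}$ at small $p$) and ensuring the leading factor $(100e^{C})^{P_{\psi,\theta}}$ does not deteriorate, so that the slack of the factor $\tfrac12$ in \ref{itm:is-near-counterexample} genuinely suffices.
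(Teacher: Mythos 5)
Your combinatorial endgame (pass to a subset extremal for $\Phi$ and delete a vertex violating item \ref{itm:combinatorially-structured}, using $1-\tfrac{x}{q'}\geq(1-x)^{1/q'}$) is exactly the paper's argument, and your bilinear bound on the mass of a fixed $p$-adic profile $(a,b)$ via the prime-removal substitution and minimality of $\lvert\mathcal P_{\psi,\theta}\rvert$ is the paper's Lemma \ref{lm:bilinear-upper-bound} in all essentials. But the arithmetic-structuring step — the heart of the proposition — has a genuine gap, which you yourself flag as ``the main obstacle'' without resolving it. Your per-profile bound (even after restoring the offset factor $p^{-\lvert a-b\rvert(\frac12-\epsilon)}$, which your stated bound drops) gives, per prime $p$, off-diagonal relative mass only $O\bigl(e^{C}p^{-\frac12+\epsilon}\bigr)$, and it gives \emph{no decay at all} along the diagonal: summing $m(i,i)\leq(\alpha_i\beta_i)^{1/q'}$ over $i$ yields only $O(1)$. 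So from these estimates alone you can neither pin the absolute level to a single $k_p$ (diagonal pairs with $i\neq k_p$ must also be discarded to get property \ref{itm:arithmetically-structured}) nor make the per-prime losses summable: $\sum_p p^{-\frac12+\epsilon}$ diverges, so the claimed cost ``$1+O(1/p^{2})$ at primes $p>p_0$'' is asserted, not derived. The missing ingredient is precisely Lemma \ref{lm:decay-away-diagonal} (Lemma 3.2 of \cite{HVW24}): it converts the bilinear bound, together with $\norm{(\alpha_i^{1/q'})}_{\ell^{q'}}=\norm{(\beta_j^{1/q'})}_{\ell^{q'}}=1$, into concentration at one diagonal point $k_p$ with exceptional mass $\ll p^{-1-2\epsilon}+p^{-\frac32+\epsilon}$, which \emph{is} summable over $p>p_0$; and its first conclusion (the lower bound on $c_1$) is also what shows no prime $p\leq p_0(\epsilon,C)$ can lie in $\mathcal P_{\psi,\theta}$ at all, which is how the paper handles small primes rather than by absorbing an $O_C(1)$ multiplicative loss per small prime into the $(100e^{C})^{-1}$ saving as you propose.

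Two smaller points. First, the inequality you call superadditivity, $\sum_a x_a^{\frac12+\epsilon}\leq\bigl(\sum_a x_a\bigr)^{\frac12+\epsilon}$, is false for an exponent below $1$; what you need (and what works) is Cauchy--Schwarz followed by $\sum_a x_a^{1+2\epsilon}\leq\bigl(\sum_a x_a\bigr)^{1+2\epsilon}$, the exponent now being at least $1$. Second, in the substitution step the rescaled weights must carry the extra factor $p^{\,b-\min(a,b)}$ (resp.\ $p^{\,a-\min(a,b)}$) as in the paper's $\widetilde{\psi_{i,j}},\widetilde{\theta_{i,j}}$; without it the condition $D_{\psi',\theta'}\leq1$ is not inherited for $a\neq b$, and it is exactly this factor that produces the $p^{-\lvert a-b\rvert(\frac12-\epsilon)}$ gain feeding into the concentration lemma. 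Until the diagonal-concentration input (or a substitute of equal strength) is supplied, the proof of properties \ref{itm:is-near-counterexample} and \ref{itm:arithmetically-structured} is incomplete.
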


\begin{proposition}
    [Resolution of minimal potential counterexample]
    \label{prop:resolution-of-counterexample}
    Fix $\psi, \theta, \epsilon, C, t, K, f, g$ satisfying the hypotheses of Theorem \ref{thm:main}.
    Suppose that $\mathcal E'\subset\mathcal E_{\psi,\theta}^{t, K}$ satisfies properties \ref{itm:combinatorially-structured} and \ref{itm:arithmetically-structured} from Proposition \ref{prop:structure-of-counterexample} (with these parameters).
    Then $\mathcal E'$ cannot satisfy property \ref{itm:is-near-counterexample} of Proposition \ref{prop:structure-of-counterexample}.
\end{proposition}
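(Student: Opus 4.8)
The goal is to establish the inequality complementary to property \ref{itm:is-near-counterexample}, namely
\[
\mu_{\psi,\theta}^{f,g}(\mathcal{E}') \leq \frac{1}{2}\,(100e^{C})^{P_{\psi,\theta}(\epsilon,C)}(\Log t)^{\frac{1}{2}(e^{40C}-1)}\bigl(\mu_\psi^f(V')\mu_\theta^g(W')e^{-CK}\bigr)^{1/q'},
\]
using freely that $\mathcal{E}'$ has properties \ref{itm:combinatorially-structured} and \ref{itm:arithmetically-structured} and lies inside $\mathcal{E}_{\psi,\theta}^{t,K}$. The membership in $\mathcal{E}_{\psi,\theta}^{t,K}$ is exploited in two ways. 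From $D_{\psi,\theta}(v,w)\leq 1$ we get $w\psi(v)\leq\gcd(v,w)$ and $v\theta(w)\leq\gcd(v,w)$, and multiplying these (with $vw=\gcd(v,w)\operatorname{lcm}(v,w)$) gives the pointwise estimate $\mu_\psi^f(v)\mu_\theta^g(w)\leq f(v)g(w)/\operatorname{lcm}(v,w)^2$; together with the divisor-sum hypotheses \eqref{eq:f-condition} this is what makes it possible to sum over the edges of $\mathcal{E}'$ while keeping a term of the right shape. From $\omega_t(v,w)\geq K$ we know that $\operatorname{lcm}(v,w)=vw/\gcd(v,w)$ has at least $K$ prime factors not exceeding $t$.

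The crucial point is to convert this last condition into a quantitative gain, and this is the role of the two anatomy lemmas of Section \ref{sec:anatomy_lemmas}. Property \ref{itm:arithmetically-structured} makes the relevant anatomy rigid: for each $(v,w)\in\mathcal{E}'$ every prime $p$ satisfies $\nu_p(v),\nu_p(w)\in\{\nu_p(N)-1,\nu_p(N),\nu_p(N)+1\}$, and the primes where $v$ is perturbed away from $N$ are disjoint from the primes where $w$ is; consequently the primes $\leq t$ dividing $\operatorname{lcm}(v,w)$ are, apart from those dividing $N$, partitioned into those ``carried by $v$'' and those ``carried by $w$''. I expect one anatomy lemma to be the combinatorial half --- distributing the $\geq K$ such primes between the $v$-side and the $w$-side, so that after splitting $\mathcal{E}'$ into the edges where $v$ carries at least half of them and the edges where $w$ does, and replacing $\mathcal{E}'$ by whichever part has larger measure, one is left with a one-sided condition --- and the other to be the analytic half, bounding the $\mu_\psi^f$-measure of a set of integers all of whose valuations near $N$ are perturbed at $\geq k$ primes $\leq t$ by roughly $(\text{const})^{P_{\psi,\theta}(\epsilon,C)}(\Log t)^{O(e^{O(C)})}e^{-Ck}$ times the corresponding unconstrained measure (with a suitably reweighted $\psi$ to account for the stripped primes), the $(\Log t)$-power coming from a Rankin/Euler-product estimate carried out prime by prime over $p\leq t$. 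Applying the analytic lemma on the side singled out by the split yields an $e^{-CK/2}$-type saving at the cost of the prefactor, the $(\Log t)$-power, and the harmless factor $2$ from the split.

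It remains to feed in property \ref{itm:combinatorially-structured}. Expanding $\mu_{\psi,\theta}^{f,g}(\mathcal{E}')=\sum_{v\in V'}\mu_\psi^f(v)\mu_\theta^g(\Gamma_{\mathcal{E}'}(v))$ and inserting the uniform lower bound $\mu_\theta^g(\Gamma_{\mathcal{E}'}(v))\geq\frac{1}{q'}\mu_{\psi,\theta}^{f,g}(\mathcal{E}')/\mu_\psi^f(V')$ from property \ref{itm:combinatorially-structured} (and symmetrically for $w\in W'$) converts the one-sided vertex bound from the previous step into a bound for the edge measure $\mu_{\psi,\theta}^{f,g}(\mathcal{E}')$; it is exactly this interpolation between $\mu_{\psi,\theta}^{f,g}(\mathcal{E}')$, $\mu_\psi^f(V')$ and $\mu_\theta^g(W')$ that lands us on the exponent $1/q'=\frac{1}{2}+\epsilon$, and it is here that the value $q=\frac{2}{1-2\epsilon}$ --- and thus the hypothesis $\epsilon\leq 2/5$ --- is used. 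Collecting the three contributions yields the claimed inequality, so property \ref{itm:is-near-counterexample} must fail.

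The main obstacle I anticipate is not conceptual but the arithmetic of the three competing factors --- the prefactor $(100e^{C})^{P_{\psi,\theta}(\epsilon,C)}$, the anatomy cost $(\Log t)^{\frac{1}{2}(e^{40C}-1)}$, and the gain $e^{-CK/2}$ --- which, after the split and the interpolation through property \ref{itm:combinatorially-structured}, must be shown to fit inside the precise constants of \eqref{eq:main-thm-bound} uniformly in $t\geq1$ and $K\in\R$. As explained in the remarks after Theorem \ref{thm:main}, this is the reason the anatomy lemmas have to be stated with an unspecified constant rather than the explicit ``$100$'' of \cite[Lemmas~4.1, 4.2]{HVW24}, so that the $C$-dependence propagates into the $(\Log t)$-exponent in a controlled way and keeps it of the shape $\frac{1}{2}(e^{O(C)}-1)$; tracking these exponents (and, for small $\epsilon$, bringing the constant in the exponent down towards $4$) is the delicate part of the computation.
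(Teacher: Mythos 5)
There is a genuine gap at the decisive point: the exponent. The machinery you describe --- using $D_{\psi,\theta}(v,w)\leq 1$, the Rankin-type anatomy estimates for the condition $\omega_t(v,w)\geq K$, and property (\ref{itm:combinatorially-structured}) to pass from a vertex-type bound to the edge measure --- naturally produces an inequality of the shape \eqref{eq:almost-contradiction}, i.e.\ $\mu_{\psi,\theta}^{f,g}(\mathcal E')\ll_C(\Log t)^{\frac{1}{2}(e^{40C}-1)}(\mu_{\psi}^{f}(V')\mu_{\theta}^{g}(W')e^{-10CK})^{\frac{1}{2}}$, with exponent exactly $\tfrac12$, not $\tfrac1{q'}=\tfrac12+\epsilon$. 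Your claim that ``it is exactly this interpolation \ldots that lands us on the exponent $1/q'$'' is where the argument would fail: the double use of property (\ref{itm:combinatorially-structured}) is a Cauchy--Schwarz-type step and cannot by itself create the extra $\epsilon$ in the exponent. The paper needs a separate closing argument: reduce WLOG to $K\geq 0$, then split into two cases according to whether $(\mu_\psi^{f}(V')\mu_\theta^{g}(W'))^{1/q}$ is below the threshold \eqref{eq:conclusion_easy_case} (in which case the trivial bound $\mu_{\psi,\theta}^{f,g}(\mathcal E')\leq\mu_\psi^{f}(V')\mu_\theta^{g}(W')$ already refutes property (\ref{itm:is-near-counterexample})) or above it, in which case the deliberately oversized exponential saving $e^{-10CK}$ (where only $e^{-CK/q'}$ is needed) and the large prefactor $(100e^{C})^{P_{\psi,\theta}(\epsilon,C)}$, with $p_0(\epsilon,C)$ chosen large, are traded for the missing factor $(\mu_\psi^{f}(V')\mu_\theta^{g}(W'))^{\epsilon}$. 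Without this step your sketch only refutes a weakened property (\ref{itm:is-near-counterexample}) with exponent $\tfrac12$, which is not the statement of Proposition \ref{prop:resolution-of-counterexample}; this is also why the anatomy lemmas are applied with $40C$ in place of $C$.

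Secondary, but still substantive, is the mechanism of the main estimate itself. Both Lemma \ref{lm:unweighted_anatomy} and Lemma \ref{lm:divisor_anatomy} are analytic Rankin/Mertens bounds; neither is a ``combinatorial half'', and the pigeonholing of the $\geq K$ small primes is a four-way split over $v^-,v^+,w^-,w^+$ (each part receiving $\geq K/4$), not a two-way $v$/$w$ split --- the minus parts are divisors of $N$ and are handled by the divisor lemma, while the plus parts are handled by the unweighted lemma only after being confined to intervals $[1,v_0^{+}(w)]$ and $[1,w_0^{+}]$. Moreover, the condition $D_{\psi,\theta}(v,w)\leq 1$ is not used by multiplying the two inequalities into $f(v)g(w)/\operatorname{lcm}(v,w)^{2}$ and summing over edges; it is used one-sidedly with extremal elements: one fixes $w_0\in W'$ maximising $w_0^{+}$ and, for each $w$, some $v_0(w)\in\Gamma_{\mathcal E'}(w)$ maximising $v^{+}$, so that $\psi(v)\leq 1/(v^{-}w_0^{+})$ and $\theta(w)\leq 1/(v_0^{+}(w)w^{-})$, and it is precisely these maximal quantities that cancel the lengths of the reparametrised sums over $v^{+}\leq v_0^{+}(w)$ and $w^{+}\leq w_0^{+}$. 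Property (\ref{itm:combinatorially-structured}) is likewise not used by ``inserting the uniform lower bound into the expansion'' of $\mu_{\psi,\theta}^{f,g}(\mathcal E')$ (that yields only a trivial self-inequality); it is applied twice, once at $w_0$ and once at each $v\in\Gamma_{\mathcal E'}(w_0)$, to bound $\mu_{\psi,\theta}^{f,g}(\mathcal E')/(\mu_\psi^{f}(V')\mu_\theta^{g}(W'))^{1/2}$ by $q'$ times the square root of the structured double sum. Your proposal gestures at the right ingredients, but without the extremal-element cancellation and the final exponent-upgrading case analysis it does not yield the proposition.
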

The combination of Propositions \ref{prop:structure-of-counterexample} and \ref{prop:resolution-of-counterexample} immediately implies Theorem \ref{thm:main}.

It is remarkable how well the method from \cite{HVW24} adapts to this new situation. And indeed, aside from the difference in the bounds coming from the anatomy lemmas, Lemmas \ref{lm:unweighted_anatomy} and \ref{lm:divisor_anatomy}, no new insight is needed for the proof. We refer the reader to the original paper \cite{HVW24} for more in-depth explanations and remarks.

\section{Proof of Proposition \ref{prop:structure-of-counterexample}}
\label{sec:proof_of_structure}
Let $\psi, \theta, \epsilon, C, t, K, f, g$ and $\mathcal E$ be the potential counterexample to Theorem \ref{thm:main} defined in the statement of Proposition \ref{prop:structure-of-counterexample}, and let $p\in\mathcal P_{\psi,\theta}$. We will show that the minimality condition of the counterexample implies that the $p$-adic valuations $(\nu_p(v), \nu_p(w))$ for $(v,w)\in\mathcal E$ concentrate near a diagonal point. For integers $i,j\geq 0$, define $V_i\defeq \{ v\in V:\nu_p(v)=i \}$ and $W_j\defeq\{w\in W:\nu_p(w)=j\}$. We also write 
\begin{align}
    m(i,j)\defeq \frac{\mu_{\psi,\theta}^{f,g}( \mathcal E\cap(V_i\times W_j) )}{\mu_{\psi,\theta}^{f,g}( \mathcal E )}.
\end{align}
This is well defined as $\mu_{\psi,\theta}^{f,g}( \mathcal E )>0$ since $\mathcal E$ is a counterexample to Theorem \ref{thm:main}.

The first step is to use the minimality assumption to prove a bilinear upper bound for $m(i,j)$.
\begin{lemma}
    [Bilinear upper bound]
    \label{lm:bilinear-upper-bound}
    Let $\psi, \theta, \epsilon, C, t, K, f, g, \mathcal E, m(i,j)$ be as above. Writing for $i,j\geq 0$, $\alpha_i \defeq\mu_\psi^{f}(V_{i})/\mu_\psi^{f}(V)$ and $\beta_j \defeq \mu_\theta^{g}(W_j)/\mu_\theta^{g}(W)$, we have 
    \begin{align}
        m(i,j)\leq
        (100e^{C})^{-\indicator{p\leq p_0(\epsilon, C)}}p^{\frac{-\lvert i-j\rvert}{q}}(\alpha_i \beta_j e^{\indicator{i\neq j}\cdot C})^{\frac{1}{q'}}.
    \end{align}
\end{lemma}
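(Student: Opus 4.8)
The plan is to exploit the minimality of $|\mathcal P_{\psi,\theta}|$ by building, for each pair $(i,j)$, an auxiliary pair of functions supported on strictly fewer primes, to which Theorem \ref{thm:main} applies. Fix the prime $p \in \mathcal P_{\psi,\theta}$ and fix $i,j \geq 0$. First I would restrict attention to $\mathcal E_{i,j} \defeq \mathcal E \cap (V_i \times W_j)$ and define new finitely supported functions by dividing out the $p$-part: roughly, set $\psi'(v) \defeq p^{\lambda_i}\psi(p^i v)$ on the integers $v$ coprime to $p$ arising as $v_0$ with $p^i v_0 \in V_i$, and similarly $\theta'(w) \defeq p^{\lambda_j}\theta(p^j w)$, where the shifts $\lambda_i, \lambda_j$ are chosen (depending on whether $i=j$ or $i\neq j$, and on the sign of $i-j$) so that the rescaled $D$-quantity still satisfies $D_{\psi',\theta'}(v_0,w_0) \leq 1$ on the image of $\mathcal E_{i,j}$ — this is where the arithmetic condition defining $\mathcal E_{\psi,\theta}^{t,K}$ gets transported, using that $D_{\psi,\theta}$ scales by a controlled power of $p$ under this substitution. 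The multiplicative functions $f,g$ are unchanged; since $p$ no longer divides any element in the new supports, the hypothesis $|\mathcal P_{\psi',\theta'}| \leq |\mathcal P_{\psi,\theta}| - 1 < |\mathcal P_{\psi,\theta}|$ holds, so Theorem \ref{thm:main} is valid for $(\psi',\theta')$.

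Next I would track how each of the four factors in the bound \eqref{eq:main-thm-bound} transforms under the rescaling. The measures satisfy $\mu_{\psi',\theta'}^{f,g}(\mathcal E_{i,j}') = c \cdot \mu_{\psi,\theta}^{f,g}(\mathcal E_{i,j})$ and $\mu_{\psi'}^{f}(V_i') = c_1 \mu_\psi^f(V_i)$, $\mu_{\theta'}^{g}(W_j') = c_2 \mu_\theta^g(W_j)$ for explicit powers of $p$ (and factors coming from $f(p^i), g(p^j)$ controlled via \eqref{eq:f-condition}, i.e. $(1\star f)(p^i)\leq p^i$ forces $f(p^i)\leq p^i$, etc.), with the key point that the ratio of ratios produces exactly the advertised $p^{-|i-j|/q}$ savings once one optimizes the shift parameters $\lambda_i,\lambda_j$. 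The $\omega_t$ condition transfers because removing the single prime $p$ changes $\omega_t(v,w)$ by at most $1$, which can be absorbed into $K$ (changing $e^{-CK}$ by a bounded factor $e^{\pm C}$) — and crucially this is the source of the $e^{\indicator{i\neq j}C}$ factor: when $i=j$ the common prime $p$ does not contribute to $\omega_t(v_0,w_0)$ versus $\omega_t(v,w)$, but when $i \neq j$ it does, so $K$ effectively drops by $1$. The leading factor $(100e^C)^{P_{\psi',\theta'}}$ loses exactly one copy of $100e^C$ precisely when $p \leq p_0(\epsilon,C)$ (since then $p$ was counted in $P_{\psi,\theta}$), giving the $(100e^C)^{-\indicator{p\leq p_0}}$ prefactor; the $(\Log t)$ factor is unchanged as $t$ is untouched.

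Assembling these, Theorem \ref{thm:main} applied to $(\psi',\theta')$ reads $\mu_{\psi,\theta}^{f,g}(\mathcal E_{i,j}) \leq (100e^C)^{-\indicator{p\leq p_0}} p^{-|i-j|/q}(\Log t)^{\frac12(e^{40C}-1)}(\mu_\psi^f(V_i)\mu_\theta^g(W_j)e^{-CK}e^{\indicator{i\neq j}C})^{1/q'}$ after renormalizing, and dividing both sides by $\mu_{\psi,\theta}^{f,g}(\mathcal E) = (100e^C)^{P_{\psi,\theta}}(\Log t)^{\frac12(e^{40C}-1)}(\mu_\psi^f(V)\mu_\theta^g(W)e^{-CK})^{1/q'} \cdot (\text{the quantity that makes } \mathcal E \text{ a counterexample, which is} > 1)$ — wait, more precisely, since $\mathcal E$ fails \eqref{eq:main-thm-bound} we have $\mu_{\psi,\theta}^{f,g}(\mathcal E) \geq (100e^C)^{P_{\psi,\theta}}(\Log t)^{\frac12(e^{40C}-1)}(\mu_\psi^f(V)\mu_\theta^g(W)e^{-CK})^{1/2+\epsilon}$, and dividing yields $m(i,j) = \mu_{\psi,\theta}^{f,g}(\mathcal E_{i,j})/\mu_{\psi,\theta}^{f,g}(\mathcal E) \leq (100e^C)^{-\indicator{p\leq p_0}} p^{-|i-j|/q}(\alpha_i\beta_j)^{1/q'}e^{\indicator{i\neq j}C/q'}$ after the $(\Log t)$ and $e^{-CK}$ and $(100e^C)^{P}$ factors cancel against the lower bound, using $1/q' = 1/2+\epsilon$. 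The stated bound has $e^{\indicator{i\neq j}C}$ rather than $e^{\indicator{i\neq j}C/q'}$, which is weaker and hence still follows. I expect the \textbf{main obstacle} to be the bookkeeping in the first step: constructing $\psi',\theta'$ so that $D_{\psi',\theta'} \leq 1$ is genuinely preserved, choosing the shifts $\lambda_i,\lambda_j$ so that the $p$-power savings come out to exactly $p^{-|i-j|/q}$ (not just $\ll p^{-|i-j|/q}$), and verifying that $f,g$ still satisfy \eqref{eq:f-condition} after any rescaling — this will require the case analysis $w\psi(v) \geq v\theta(w)$ versus the reverse, and care that the $e^{\pm C}$ coming from the $\omega_t$-shift lands on the correct side. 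The remaining steps are routine multiplicative-function estimates.
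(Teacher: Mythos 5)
Your proposal is correct and follows essentially the same route as the paper: restrict to $\mathcal E\cap(V_i\times W_j)$, strip the prime $p$ via a power-of-$p$ rescaling of $\psi,\theta$ chosen to preserve $D\leq 1$, invoke the minimality of $\lvert\mathcal P_{\psi,\theta}\rvert$ to apply Theorem \ref{thm:main} to the $p$-free data (with $K$ replaced by $K-\indicator{i\neq j}$ and the leading factor reduced by $(100e^C)^{\indicator{p\leq p_0}}$), and divide by the counterexample lower bound for $\mu_{\psi,\theta}^{f,g}(\mathcal E)$. The bookkeeping you flag as the main obstacle resolves cleanly with the shifts $\widetilde\psi_{i,j}(v)=p^{\,j-\min(i,j)}\psi(p^iv)$ and $\widetilde\theta_{i,j}(w)=p^{\,i-\min(i,j)}\theta(p^jw)$, which make $D$ exactly invariant (no case analysis on the maximum) and, together with $f(p^i)\leq p^i$, $g(p^j)\leq p^j$, yield precisely the $p^{-\lvert i-j\rvert/q}$ saving.
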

\begin{proof}
    We will bound $m(i, j)$ by bounding $\mu _{\psi, \theta}^{f, g}(\mathcal E)$ from below and $\mu_{\psi,\theta}^{f, g}(\mathcal E\cap(V_i\times W_j))$ from above.
    For the lower bound we simply use the assumption that $\mathcal E$ is a counterexample to Theorem \ref{thm:main}, as \eqref{eq:main-thm-bound} implies
    \begin{align}
        \label{eq:lower-bound-bub-lemma}
        \mu_{\psi,\theta}^{f,g}(\mathcal E) \geq (100e^{C}) ^{P_{\psi,\theta}(\epsilon, C)}(\Log t)^{\frac{1}{2}(e^{40C}-1)}(\mu_{\psi}^{f}(V)\mu_{\theta}^{g}(W)e^{-CK})^{\frac{1}{2}+\epsilon}.
    \end{align}

    For the upper bound, we remove the contribution of the prime $p$. By the minimality condition of $\mathcal E$, the result will satisfy Theorem \ref{thm:main}, which gives the upper bound.
    We define
    \begin{align}
        \widetilde{\mathcal E_{i, j}} \defeq \{ (v, w):(p^{i}v, p^{j}w)\in\mathcal E\cap(V_i\times W_j) \},\quad
        \widetilde{V_i} \defeq \{ v: p^{i}v\in V_i \}
        \quad\text{and}\quad
        \widetilde{W_j} \defeq \{ q: p^{j}w\in W_j \}.
    \end{align}
    We also introduce scaled versions of $\psi$ and $\theta$ as follows, 
    \begin{align}
        \widetilde{\psi_{i, j}}(v) \defeq
        \begin{cases}
            p^{j -\min (i,j)}\psi(p^{i}v)   &\text{if } p\notdivides v,\\
            0                               &\text{if } p\divides v,
        \end{cases}
    \end{align}
    and
    \begin{align}
        \widetilde{\theta_{i, j}}(w) \defeq
        \begin{cases}
            p^{i -\min (i,j)}\theta(p^{j}w) &\text{if } p\notdivides w,\\
            0                               &\text{if } p\divides w.
        \end{cases}
    \end{align}
    The rescaling guarantees that for $v\in \widetilde{V_i}$ and $w\in\widetilde{W_j}$, we have $D_{\widetilde{\psi_{i, j}}, \widetilde{\theta_{i, j}}}(v, w) = D_{\psi, \theta}(p^{i}v, p^{j}w) \leq 1$. 
    Moreover, it is clear that $\omega_{t}(v, w) = \omega_{t}(p^{i}v, p^{j}w)-\indicator{\substack{i\neq j\\ p\leq t}} \geq K-\indicator{i\neq j}$ whenever $v\in \widetilde{V_i}$ and $w\in\widetilde{W_j}$.
    Therefore, $\widetilde{\mathcal E_{i,j}} \subset \mathcal E_{\widetilde{\psi_{i, j}}, \widetilde{\theta_{i, j}}}^{t, K-\indicator{i\neq j}}$ and so $\widetilde{\mathcal E_{i,j}}$ is of the form considered in Theorem \ref{thm:main}.
    Finally, 
    \begin{align}
        \mathcal P_{\widetilde{\psi_{i, j}}, \widetilde{\theta_{i, k}}} \subseteq \mathcal P_{\psi,\theta}\setminus\{p\},
    \end{align} 
    and by the minimality assumption of the counterexample $\mathcal E$, the bound \eqref{eq:main-thm-bound} holds for $\mu_{\widetilde{\psi_{i, j}},\widetilde{\theta_{i, j}}}^{f, g}(\widetilde{\mathcal E_{i, j}})$. Hence 
    \begin{align}
        \label{eq:inductive-bound-no-p}
        \mu_{\widetilde{\psi_{i, j}},\widetilde{\theta_{i, j}}}^{f, g}(\widetilde{\mathcal E_{i, j}})
        \leq (100e^{C})^{P_{\psi, \theta}(\epsilon, C)-\indicator{p\leq p_0}}(\Log t)^{\frac{1}{2}(e^{40C}-1)}
        (
            \mu_{\widetilde{\psi_{i, j}}}^{f}(\widetilde{V_{i}})
            \mu_{\widetilde{\theta_{i, j}}}^{g}(\widetilde{W_{j}})
            e^{-C(K-\indicator{i\neq j})}
        )^{\frac{1}{2}+\epsilon}.
    \end{align}
    It is now a matter of unpacking the definitions. Observe that for $v\in \widetilde{V_{i}}$,
    \begin{align}
        \frac{f(v)\widetilde{\psi_{i,j}}(v)}{v}
        = p^{j-\min(i, j)}\Big( \frac{p^{i}}{f(p^{i})} \Big)\frac{f(p^{i}v)\psi(p^{i}v)}{p^{i}v}
    \end{align}
    and therefore
    \begin{align}
        \mu_{\widetilde{\psi_{i,j}}}^{f}(\widetilde{V_{i}})
        = p^{j-\min(i, j)}\Big( \frac{p^{i}}{f(p^{i})} \Big) \mu_{\psi}^{f}(V_i).
    \end{align}
    Similarly 
    \begin{align}
        \mu_{\widetilde{\theta_{i,j}}}^{g}(\widetilde{W_{j}})
        = p^{i-\min(i, j)}\Big( \frac{p^{j}}{g(p^{j})} \Big) \mu_{\theta}^{g}(W_j),
    \end{align}
    and also 
    \begin{align}
        \mu_{\widetilde{\theta_{i,j}}, \widetilde{\psi_{i,j}}}^{f,g}(\widetilde{\mathcal E_{i, j}})
        = \Big( \frac{p^{i+j}}{f(p^{i})g(p^{j})} \Big) p^{\lvert i-j\rvert} \mu_{\psi, \theta}^{f,g}(\mathcal E\cap(V_i\times W_j)).
    \end{align}
    Combining this with \eqref{eq:inductive-bound-no-p}, 
    \begin{align}
        &\mu_{\psi, \theta}^{f,g}(\mathcal E\cap(V_i\times W_j))\\
        &\leq \Big( \frac{f(p^{i})g(p^{j})}{p^{i+j}} \Big) ^{\frac{1}{2} - \epsilon}p^{-\lvert i-j \rvert(\frac{1}{2}-\epsilon)}
        (100e^{C})^{P_{\psi,\theta}(\epsilon, C)-\indicator{p\leq p_0(\epsilon, C)}}\\
        &\omit$ \hfill \cdot(\Log t)^{\frac{1}{2}(e^{40C}-1)}(\mu_{\psi}^{f}(V_i)\mu_{\theta}^{g}(W_j)e^{-C(K-\indicator{i\neq j})})^{\frac{1}{2}+\epsilon}$\\
        &\leq p^{-\lvert i-j \rvert(\frac{1}{2}-\epsilon)}
        (100e^{C})^{P_{\psi,\theta}(\epsilon, C)-\indicator{p\leq p_0(\epsilon, C)}}(\Log t)^{\frac{1}{2}(e^{40C}-1)}(\mu_{\psi}^{f}(V_i)\mu_{\theta}^{g}(W_j)e^{-C(K-\indicator{i\neq j})})^{\frac{1}{2}+\epsilon}
    \end{align}
    since $f(p^{i})\leq p^{i}$ and $g(p^{j})\leq p^{j}$.
    Combining the above with the lower bound for $\mu_{\psi, \theta}^{f, g}(\mathcal E)$ from \eqref{eq:lower-bound-bub-lemma} we get 
    \begin{align}
        m(i, j)
        \leq (100e^{C})^{-\indicator{p\leq p_0(\epsilon, C)}}p^{-\lvert i-j\rvert(\frac{1}{2}-\epsilon)}(\alpha_i\beta_j e^{C\cdot \indicator{i\neq j}})^{\frac{1}{2}+\epsilon},
    \end{align}
    as we wanted.
\end{proof}

The concentration near a diagonal point mentioned above follows from the bilinear upper bound by means of the following lemma.
We refer the reader to \cite[Section 3]{HVW24} for a proof and further discussion of this result.
\begin{lemma}
    [Decay away from the diagonal, {\cite[Lemma 3.2]{HVW24}}] 
    \label{lm:decay-away-diagonal}
    Let $q > 2$, and write $q'$ for the conjugate index of $q$ (i.e. $\frac{1}{q}+\frac{1}{q'} = 1$). Let $c_1\leq 1, 0<c_2<1, \lambda \in (0, 1-c_2]$, and $C_3 > 0$. Suppose that $m$ is a finitely-supported probability measure on $\Z^{2}$. Suppose that there are sequences $x = (x_i)_{i\in\Z}, y = (y_j)_{j\in\Z}$ of non-negative reals such that $\lVert x\rVert_{\ell^{q'}(\Z)}=\lVert y\rVert_{\ell^{q'}(\Z)} =1$, and such that for all $(i,j)\in\Z^{2}$ we have 
    \begin{align}
        m(i,j)\leq
        \begin{cases}
            c_1x_iy_j                                   & \text{if }i=j,\\
            c_1C_3\lambda^{\lvert i-j\rvert}x_iy_j      & \text{if }i\neq j.
        \end{cases}
    \end{align}
    Then $c_1 \geq \frac{c_2}{1+(2C_3-1)\lambda}$, and there exists $k\in \Z$ such that 
    \begin{align}
        \sum_{\lvert i-k\rvert + \lvert j-k\rvert \geq 2} m(i,j) \ll_{q,c_2,C_3}(\lambda^{q})^{\frac{2}{q'}}+(\lambda^{q})^{1+\frac{1}{q}}.
    \end{align}
\end{lemma}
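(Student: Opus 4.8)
The plan is to extract both conclusions from a single inequality: sum the hypothesis over $\Z^{2}$ and, since $m$ is a probability measure, split off the diagonal to get
\[
    1 = \sum_{i} m(i,i) + \sum_{i\neq j} m(i,j) \leq c_1 \sum_i x_i y_i + c_1 C_3 \sum_{i\neq j} \lambda^{\abs{i-j}} x_i y_j =: c_1 A + c_1 C_3 B.
\]
Because $q>q'$ one has $\norm{\,\cdot\,}_{\ell^{q}}\leq \norm{\,\cdot\,}_{\ell^{q'}}$, so Hölder (with exponents $q',q$) and shift-invariance of $\norm{y}_{\ell^{q}}$ give $A\leq \norm{x}_{\ell^{q'}}\norm{y}_{\ell^{q}}\leq 1$ and $B\leq 2\sum_{d\geq1}\lambda^{d}=2\lambda/(1-\lambda)$; the first conclusion of the lemma follows at once using $\lambda\leq 1-c_2$. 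Solving the same inequality for $A$ and using $c_1\leq 1$ gives $\sum_i x_i y_i=A\geq 1-C_3 B\geq 1-2C_3\lambda/(1-\lambda)$, and since also $A\leq\norm{x}_{\ell^{q}}$ and $A\leq\norm{y}_{\ell^{q}}$, both $1-\norm{x}_{\ell^{q}}$ and $1-\norm{y}_{\ell^{q}}$ are $O_{c_2,C_3}(\lambda)$. I may assume throughout that $\lambda$ is smaller than a constant $\lambda_0(q,c_2,C_3)$, since otherwise the claimed right-hand side is $\gg_{q,c_2,C_3}1$ and the second conclusion is trivial.

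The heart of the argument is to upgrade this to: $x$ and $y$ concentrate in $\ell^{q'}$ at one common index $k$, with $\epsilon_x:=\sum_{i\neq k}x_i^{q'}$ and $\epsilon_y:=\sum_{j\neq k}y_j^{q'}$ both $\ll_{q,c_2,C_3}\lambda^{q}$. First let $k_x$ be a point where $x$ is maximal; from $\norm{x}_{\ell^q}^{q}\leq x_{k_x}^{q-q'}$ and the elementary inequality $1-s^{a}\geq\tfrac{a}{b}(1-s^{b})$ (valid for $0<a<b$, $s\in[0,1]$), applied with $s=x_{k_x}$, $a=(q-2)/(q-1)$, $b=q'$, one obtains $1-\norm{x}_{\ell^q}\geq\tfrac{q-2}{q}\epsilon_x$, hence $\epsilon_x\ll_{q,c_2,C_3}\lambda$; likewise $\epsilon_y\ll\lambda$ with maximiser $k_y$. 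If $k_x\neq k_y$ then splitting $\sum_i x_i y_i$ at $i=k_x$, at $i=k_y$ and elsewhere and using Hölder forces $\sum_i x_i y_i\ll\max(\epsilon_x,\epsilon_y)^{1/q'}\ll\lambda^{1/q'}$, contradicting $\sum_i x_i y_i\geq1-O(\lambda)$ once $\lambda<\lambda_0$; so $k:=k_x=k_y$. Now put $\tilde x=x\,\indicator{i\neq k}$, $\tilde y=y\,\indicator{j\neq k}$, $\epsilon=\max(\epsilon_x,\epsilon_y)\leq1$. Using, for $m\neq0$, the identity $\sum_i x_i y_{i+m}=x_k\tilde y_{k+m}+\tilde x_{k-m}y_k+\sum_i\tilde x_i\tilde y_{i+m}$ and applying Hölder to each term (the weight $(\lambda^{\abs{m}})_{m\neq0}$ having $\ell^q$-norm $\ll_{q,c_2}\lambda$, by $\lambda\leq1-c_2$) gives $B\ll_{q,c_2}\lambda\,\epsilon^{1/q'}$. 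Substituting this into $1-\norm{x}_{\ell^q},\,1-\norm{y}_{\ell^q}\leq C_3 B$ yields the self-improving bound $\epsilon\ll_{q,c_2,C_3}\lambda\,\epsilon^{1/q'}$, i.e.\ $\epsilon^{1/q}\ll\lambda$ (since $1-1/q'=1/q$), whence $\epsilon\ll_{q,c_2,C_3}\lambda^{q}$.

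Granting the concentration, the second conclusion is a matter of bookkeeping. The set $\{\abs{i-k}+\abs{j-k}\geq2\}$ is the disjoint union of the diagonal points $(i,i)$ with $i\neq k$ and the off-diagonal points minus the four near points $(k,k\pm1),(k\pm1,k)$. The diagonal part contributes $\sum_{i\neq k}m(i,i)\leq\sum_i\tilde x_i\tilde y_i\leq\norm{\tilde x}_{\ell^{q'}}\norm{\tilde y}_{\ell^{q}}\leq(\epsilon_x\epsilon_y)^{1/q'}\ll(\lambda^{q})^{2/q'}$. For the off-diagonal part, split by $d=\abs{i-j}$: the terms with $d\geq2$ are unrestricted, and the same expansion (now with the weight $(\lambda^{\abs{m}}\indicator{\abs{m}\geq2})_m$, of $\ell^{q}$-norm $\ll_{q,c_2}\lambda^{2}$) bounds their total by $\ll_{q,c_2}\lambda^{2}\epsilon^{1/q'}\ll(\lambda^{q})^{1+1/q}$; the terms with $d=1$, after removing the four near points, involve only coordinates $\neq k$, hence contribute $\ll\lambda(\epsilon_x\epsilon_y)^{1/q'}\ll(\lambda^{q})^{2/q'}$. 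Multiplying by $c_1 C_3\leq C_3$ and adding gives $\sum_{\abs{i-k}+\abs{j-k}\geq2}m(i,j)\ll_{q,c_2,C_3}(\lambda^{q})^{2/q'}+(\lambda^{q})^{1+1/q}$.

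The genuinely delicate point is the bootstrap in the second paragraph: the direct estimates only deliver $\epsilon=O(\lambda)$, which is too weak — it would give an error of order $\lambda^{2/q'}$ rather than $\lambda^{2(q-1)}=(\lambda^{q})^{2/q'}$ — and one must notice that $B\ll\lambda\epsilon^{1/q'}$ combined with $\epsilon\ll B$ is a self-improving inequality that lifts the exponent from $\lambda$ to $\lambda^{q}$. Everything else is routine: the geometric-series estimates for the various $\ell^{q}$-norms of truncated geometric weights, and the care needed to keep every implied constant dependent only on $q$, $c_2$ and $C_3$ (never on $c_1$ or $\lambda$), which is why the regime $\lambda\gtrsim_{q,c_2,C_3}1$ is quarantined at the start.
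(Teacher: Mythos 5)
This lemma is not proved in the present paper: it is imported verbatim from \cite[Lemma 3.2]{HVW24}, with the reader referred to Section 3 of that paper for the argument. Your proof is correct and self-contained, and it runs along essentially the same lines as the cited source: summing the bilinear hypothesis against the probability measure and applying H\"older (with the monotonicity $\norm{\cdot}_{\ell^{q}}\leq\norm{\cdot}_{\ell^{q'}}$ and $\lambda\leq 1-c_2$) gives both the lower bound on $c_1$ and $\sum_i x_iy_i\geq 1-O_{c_2,C_3}(\lambda)$; the elementary inequality $1-s^{a}\geq\frac{a}{b}(1-s^{b})$ correctly converts $1-\norm{x}_{\ell^{q}}$ into control of $\epsilon_x=\sum_{i\neq k}x_i^{q'}$ (the exponent bookkeeping $a=(q-2)/(q-1)$, $a/q'=(q-2)/q$ checks out); and the decisive self-improvement $\epsilon\ll_{q,c_2,C_3}\lambda\,\epsilon^{1/q'}$, hence $\epsilon\ll\lambda^{q}$, is exactly what is needed to reach the stated exponents $(\lambda^{q})^{2/q'}$ and $(\lambda^{q})^{1+1/q}$, with all implied constants depending only on $q,c_2,C_3$ as required.
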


We are now ready to complete the proof of Proposition \ref{prop:structure-of-counterexample}.

\begin{proof}
    [Proof of Proposition \ref{prop:structure-of-counterexample}]
    We now apply Lemma \ref{lm:decay-away-diagonal} to the bound obtained in Lemma \ref{lm:bilinear-upper-bound}.
    As usual $q = \frac{2}{1-2 \epsilon}$ and $q'=\frac{2}{1+2 \epsilon}$, and in this case $c_1 = (100e^{C})^{-\indicator{p\leq p_0(\epsilon, C)}}$, $\lambda = p^{-\frac{1}{2}+\epsilon}$ and $C_3 = e^{C}$. Since $\epsilon\leq 2/5$ we may take $c_2 = 1-2^{-\frac{1}{2} + \frac{2}{5}}$. We also take $x_i = \alpha_i^{\frac{1}{2}+\epsilon}$ and $y_j = \beta_j^{\frac{1}{2}+\epsilon} $.
    Extending $x_i, y_j$ and $m(i,j)$ to negative integers by extending by 0, and since $\sum_i \alpha_i = \sum_j \beta_j = 1$, these sequences satisfy the hypothesis $\norm x_{\ell^{q'}(\Z)} = \norm y_{\ell^{q'}(\Z)} = 1$.

    The first conclusion of Lemma \ref{lm:decay-away-diagonal} then implies 
    \begin{align}
        (100e^{C})^{-\indicator{p\leq p_0(\epsilon, C)}} = c_1 \geq \frac{c_2}{1+(2C_3-1)\lambda}\geq \frac{c_2}{2C_3}\geq \frac{1-2^{-1/10}}{2e^{C}} > (100e^{C})^{-1}.
    \end{align}
    Therefore $p > p_0(\epsilon ,C)$. Now by the second conclusion there exists some $k_p\in \Z_{\geq 0}$ such that 
    \begin{align}
        \label{eq:concentration_kp}
        \sum_{\abs{i-k_p}+\abs{j-k_p}\geq 2} m(i,j)\ll_{\epsilon, C}\lambda^{\frac{2q}{q'}}+\lambda^{2+\frac{q}{q'}} = p^{-1-2 \epsilon}+ p^{-\frac{3}{2}+ \epsilon} \leq p^{-1-2 \epsilon} + p^{-\frac{11}{10}} 
    \end{align}
    as $\epsilon \leq 2/5$.

    The final manipulations are now very similar to those in the proof of Proposition 2.1 in \cite[Section 3]{HVW24}.
    Consider all primes $p\in \mathcal P_{\psi, \theta}$, which we have shown must all be at least $p_0(\epsilon, C)$. Set $N \defeq \prod_{p\in\mathcal P_{\psi, \theta}}p^{k_p}$. Then most pairs $(v,w)\in\mathcal E$ (with respect to $\mu_{\psi, \theta}^{f,g}$) satisfy $\abs{\nu_p(v/N)} + \abs{\nu_p(w/N)}\leq 1$ for all primes $p$.
    Indeed, let 
    \begin{align}
        \mathcal E^{*} \defeq \{ (v,w)\in\mathcal E : \text{for all primes $p$, } \abs{\nu_p(v/N)} + \abs{\nu_p(w/N)}\leq 1  \}.
    \end{align}
    Then by \eqref{eq:concentration_kp} and a union bound, 
    \begin{align}
        \mu_{\psi,\theta}^{f,g}(\mathcal E\setminus\mathcal E^{*}) =
        \mu_{\psi,\theta}^{f,g}( \{ (v, w)\in\mathcal E:\exists p\in\mathcal P_{\psi,\theta} \st \abs{\nu_p(v)-k_p}+\abs{\nu_p(w)-k_p} \geq 2 \} )\\
        \ll_{\epsilon, C} \mu_{\psi,\theta}^{f,g}( \mathcal E )\sum_{p>p_{0}(\epsilon, C)}(p^{-1-2 \epsilon}+p^{-\frac{11}{10}}).
    \end{align}
    Therefore, since $p_0(\epsilon, C)$ is large, $\mu_{\psi,\theta}^{f,g}( \mathcal E^{*} ) \geq \frac{1}{2}\mu_{\psi,\theta}^{f,g}( \mathcal E )$. Taking $V^{*} \defeq \mathcal E^{*}\vert_{V}$ and $W^{*} = \mathcal E^{*}\vert_{W}$, it is then clear that 
    \begin{align}
        \mu_{\psi,\theta}^{f,g}( \mathcal E^{*} ) > \frac{\mu_{\psi,\theta}^{f,g}( \mathcal E )}{2}
        &> \frac{1}{2}\cdot (100e^{C})^{P_{\psi,\theta}(\epsilon, C)}(\Log t)^{\frac{1}{2}(e^{40C}-1)}(\mu_\psi^{f}(V)\mu_\theta^{g}(W)e^{-CK})^{\frac{1}{q'}}\\
        &\geq \frac{1}{2}\cdot (100e^{C})^{P_{\psi,\theta}(\epsilon, C)}(\Log t)^{\frac{1}{2}(e^{40C}-1)}(\mu_\psi^{f}(V^{*})\mu_\theta^{g}(W^{*})e^{-CK})^{\frac{1}{q'}}.
    \end{align}
    Thus $\mathcal E^{*}$ satisfies properties (\ref{itm:is-near-counterexample}) and (\ref{itm:arithmetically-structured}) from Proposition \ref{prop:structure-of-counterexample}. To obtain the combinatorial structure of property (\ref{itm:combinatorially-structured}) we need one final trick: define $\mathcal E'\subseteq \mathcal E^*$ to be minimal with respect to inclusion satisfying property (\ref{itm:is-near-counterexample}). By construction, $\mathcal E'$ must exist, since $\mathcal E^{*}$ is finite and satisfies property (\ref{itm:is-near-counterexample}). Moreover, $\mathcal E'$ satisfies property (\ref{itm:arithmetically-structured}) because $\mathcal E^{*}$ satisfies property (\ref{itm:arithmetically-structured}) itself. It remains to demonstrate property (\ref{itm:combinatorially-structured}).

    Let $V'\defeq \mathcal E'\vert_V$ and $W'\defeq\mathcal E'\vert_W$, and suppose for a contradiction that there exists some $v\in V'$ for which 
    \begin{align}
        \label{eq:failure-of-combinatorial-structure}
        \mu_{\theta}^{g}(\Gamma_{\mathcal E'}(v)) < \frac{1}{q'}\frac{\mu_{\psi,\theta}^{f,g}( \mathcal E' )}{\mu_{\psi}^{f}(V')}.
    \end{align}
    We will show that by removing $v$ we obtain a set that still satisfies property (\ref{itm:is-near-counterexample}) of Proposition \ref{prop:structure-of-counterexample}, contradicting minimality. Indeed, let 
    \begin{align}
        \mathcal E^{\text{new}} \defeq \mathcal E'\cap((V'\setminus\{v\})\times W'),
    \end{align}
    and define $V^{\text{new}}\defeq \mathcal E^{\text{new}}\vert_V$ and $W^{\text{new}}\defeq \mathcal E^{\text{new}}\vert_W$ as usual. First, observe that $\mathcal E^{\text{new}}$ is indeed smaller than $\mathcal E'$, as $v\in V'$ implies that there is $w\in W'$ such that $(v, w) \in\mathcal E'$, but now $(v,w)\not\in\mathcal E^{\text{new}}$.
    Next, from the definition and the assumption \eqref{eq:failure-of-combinatorial-structure}, we obtain
    \begin{align}
        \mu_{\psi,\theta}^{f,g}( \mathcal E^{\text{new}} )
        &= \mu_{\psi,\theta}^{f,g}( \mathcal E' ) -\mu_\psi^{f}(v)\mu_\theta^{g}(\Gamma_{\mathcal E'}(v))
        > \mu_{\psi,\theta}^{f,g}( \mathcal E' )\Big( 1-\frac{1}{q'}\frac{\mu_{\psi}^{f}(v)}{\mu_{\psi}^{f}(V')}  \Big)\\
        &\geq \mu_{\psi,\theta}^{f,g}( \mathcal E' )\Big( 1-\frac{\mu_{\psi}^{f}(v)}{\mu_{\psi}^{f}(V')}  \Big)^{\frac{1}{q'}}
        =\mu_{\psi,\theta}^{f,g}( \mathcal E' )\Big( \frac{\mu_{\psi}^{f}(V^{\text{new}})}{\mu_{\psi}^{f}(V')} \Big)^{\frac{1}{q'}}\\
        &> \frac{1}{2}\cdot (100e^{C})^{P_{\psi,\theta}(\epsilon, C)}(\Log t)^{\frac{1}{2}(e^{40C}-1)}(\mu_\psi^{f}(V^{\text{new}})\mu_\theta^{g}(W^{\text{new}})e^{-CK})^{\frac{1}{q'}},
    \end{align}
    where the last inequality uses the fact that $\mathcal E'$ satisfies property (\ref{itm:is-near-counterexample}) and 
    \begin{align}
        \frac{\mu_{\theta}^{g}(W^{\text{new}})}{\mu_{\theta}^{g}(W')}\leq 1.
    \end{align}
    This contradicts the fact that $\mathcal E'$ is minimal satisfying property (\ref{itm:is-near-counterexample}).
    The case where the contradiction arises from a $w\in W'$ is identical, so $\mathcal E'$ must satisfy property (\ref{itm:is-near-counterexample}).
    This concludes the proof of Proposition \ref{prop:structure-of-counterexample}.
\end{proof}

\section{Anatomy lemmas}
\label{sec:anatomy_lemmas}
\begin{lemma}
    [Unweighted anatomy property]
    \label{lm:unweighted_anatomy}
    For any real $x, t\geq 1, K\in \R$ and $C>0$, 
    \begin{align}
         \#\{ n\leq x: \lvert\{ p\leq t: p\divides n \}\rvert \geq K \} 
        \ll_C xe^{-CK}(\Log t)^{e^{C}-1}.
    \end{align}
\end{lemma}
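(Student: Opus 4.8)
The statement is a standard estimate on the anatomy of integers: we are counting $n \le x$ with many prime factors below $t$, and the exponential weight $e^{-CK}$ together with the $(\Log t)^{e^C-1}$ factor is exactly what a Chernoff-type / Rankin's trick argument produces. The plan is to use Rankin's trick: for any parameter $z > 1$ to be chosen,
\begin{align}
    \#\{ n\le x : \omega_{\le t}(n) \ge K \} \le \sum_{n\le x} z^{\omega_{\le t}(n) - K} = z^{-K}\sum_{n\le x} z^{\omega_{\le t}(n)},
\end{align}
where I write $\omega_{\le t}(n) \defeq \lvert\{ p\le t : p\divides n\}\rvert$. The point is that $z^{\omega_{\le t}(n)}$ is submultiplicative-ish: since $\omega_{\le t}$ is additive and takes the value $1$ on each prime power $p^a$ with $p\le t$, the function $n\mapsto z^{\omega_{\le t}(n)}$ is multiplicative and bounded above by the multiplicative function $h_z(n) \defeq \prod_{p^a \| n,\, p\le t} z$ (in fact equal to it). So the sum $\sum_{n\le x} z^{\omega_{\le t}(n)}$ is at most $\sum_{n\le x} h_z(n)$, which I bound by dropping the constraint $n\le t$ on large primes and comparing to an Euler product.

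Concretely, I would bound $\sum_{n\le x} h_z(n) \le x \sum_{n\le x} h_z(n)/n \le x \prod_{p\le t}\big(1 + \tfrac{z}{p-1}\big)\prod_{t < p}\big(1 + \tfrac{1}{p-1}\cdots\big)$ — but one must be careful, because the primes above $t$ each contribute with weight $1$, not $z$, and the tail $\prod_{p}(1-1/p)^{-1}$ diverges. The clean way is: $\sum_{n\le x} h_z(n)/n \le \prod_{p\le t}\big(1-\tfrac1p\big)^{-z}\prod_{t<p\le x}\big(1-\tfrac1p\big)^{-1}$, and then use Mertens' theorem $\prod_{p\le t}(1-1/p)^{-1} \asymp \log t$ (for $t\ge 2$) and $\prod_{t<p\le x}(1-1/p)^{-1} = \frac{\prod_{p\le x}(1-1/p)^{-1}}{\prod_{p\le t}(1-1/p)^{-1}} \ll \frac{\log x}{\log t}$. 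Hmm — that introduces an unwanted $\log x$. To avoid it, instead bound $\sum_{n \le x} h_z(n) \le x \prod_{p \le t}(1 - 1/p)^{-z}$ directly by noting that $h_z(n) \le z^{\omega_{\le t}(n)}$ depends only on the part of $n$ composed of primes $\le t$: writing $n = ab$ with $a$ being the $t$-smooth part and $b$ the $t$-rough part, $\sum_{n\le x} h_z(n) \le \sum_{a\le x,\, a \text{ is } t\text{-smooth}} z^{\omega(a)} \cdot \tfrac{x}{a} \le x\sum_{a \ge 1,\, t\text{-smooth}} \tfrac{z^{\omega(a)}}{a} = x\prod_{p\le t}\big(1 + \tfrac{z}{p-1}\big)$, which converges and is $\ll (\log t)^z$ by Mertens (for $t \ge 2$; the case $t < 2$ is trivial since then there are no primes $\le t$ and $\Log t = 1$, $K \le 0$ or the count is $0$). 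Combining, $\#\{\cdots\} \ll_z x z^{-K} (\log t)^z$.

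Finally I optimize the choice of $z$. Choosing $z = e^C$ gives $\#\{\cdots\} \ll_C x e^{-CK}(\log t)^{e^C}$, and one absorbs the discrepancy between $\log t$ and $\Log t = \max\{1,\log t\}$: the claimed exponent is $e^C - 1$ rather than $e^C$, so I should be slightly more careful and pull out a factor. Writing $\prod_{p\le t}(1 + \tfrac{z}{p-1}) \le \prod_{p \le t}(1-1/p)^{-z} \ll_z (\log t)^z$ more precisely, Mertens gives $\prod_{p\le t}(1-1/p)^{-1} = e^\gamma \log t \,(1 + o(1))$, so $\prod_{p\le t}(1-1/p)^{-z} \ll_z (\log t)^z$ — to get the exponent $e^C - 1$ one uses instead that the two-sided Mertens bound $\prod_{2 \le p \le t}(1+\tfrac{z}{p-1}) \ll_z (\Log t)^{z}$ can in fact be sharpened, or more simply one just accepts exponent $e^C$ and notes $(\Log t)^{e^C} = (\Log t)\cdot(\Log t)^{e^C - 1}$; the extra $\Log t$ factor is harmless in all downstream applications, though to match the paper's statement exactly one writes $z = e^C$ and uses the bound $\prod_{p \le t}(1 + \tfrac{z}{p-1}) \le \exp\!\big(z\sum_{p\le t}\tfrac{1}{p-1}\big) \ll (\Log t)^{e^C}$ and then observes that the constant can be arranged so that one power of $\Log t$ is traded against the implied constant only when $\log t$ is bounded — i.e. the honest statement is that the exponent $e^C - 1$ follows from the refined Mertens product $\prod_{p \le t}\frac{1}{1 - 1/p} \sim e^\gamma \log t$ combined with bounding $1 + \frac{z}{p-1} \le (1 - \frac1p)^{-z}$ only for $p$ large and handling small $p$ by a constant. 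The main obstacle, then, is purely bookkeeping: getting the exponent of $\Log t$ down to exactly $e^C - 1$ (rather than $e^C$) and making sure no stray $\log x$ sneaks in from the rough part of $n$; both are handled by the $t$-smooth-part decomposition above and a careful application of Mertens' third theorem.
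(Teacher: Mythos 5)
Your opening move --- Rankin's trick with the weight $z^{\omega_{\le t}(n)}$ and the choice $z=e^{C}$ --- is exactly the paper's first step, so the overall structure is right. But the argument as written does not reach the stated exponent $e^{C}-1$, and the remedies you sketch at the end cannot close the gap. Your smooth/rough decomposition bounds the number of $t$-rough integers $b\le x/a$ trivially by $x/a$, which yields only $\ll_C x e^{-CK}(\Log t)^{e^{C}}$. Sharpening Mertens' constants, or using $1+\frac{z}{p-1}\le (1-\frac{1}{p})^{-z}$ only for large $p$, changes nothing, because $\prod_{p\le t}(1-1/p)^{-z}\asymp_z (\log t)^{z}$: the exponent stays $z=e^{C}$ no matter how precisely you evaluate these products. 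The lost power of $\Log t$ is not a bookkeeping issue with Mertens; it comes from the trivial treatment of the rough part. The saving must come from the fact that $t$-rough integers have density $\prod_{p\le t}(1-1/p)\asymp 1/\Log t$: one needs a sieve bound of the shape $\#\{b\le y:\ p\divides b\Rightarrow p>t\}\ll y\prod_{p\le t}(1-1/p)+1$ (Legendre/Brun or the fundamental lemma), inserted into the decomposition $n=ab$, together with an extra argument for the range $x/a<t$ where the ``$+1$'' term dominates. That is a genuine missing step, not a constant to be traded into the implied constant.

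The paper sidesteps all of this by quoting a standard mean value theorem for non-negative multiplicative functions bounded on prime powers (Theorem 14.2 in Koukoulopoulos's book, a Shiu/Hall--Tenenbaum type bound), namely $\sum_{n\le x}f(n)\ll x\exp\bigl(\sum_{p\le x}\frac{f(p)-1}{p}\bigr)$ applied to $f(n)=\exp\bigl(C\,\lvert\{p\le t: p\divides n\}\rvert\bigr)$; the ``$-1$'' in that exponent is precisely where the factor $\Log t$ is gained, and Mertens' second theorem then gives $(\Log t)^{e^{C}-1}$. So either cite such a mean value theorem or carry out the sieve estimate on the rough part; as it stands your proposal proves the lemma only with exponent $e^{C}$ (which would in fact still suffice for the downstream applications after adjusting constants, but is not the statement you were asked to prove).
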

\begin{proof}
    Take $C > 0$. Then by a Rankin trick
    \begin{align}
        \# \{ n\leq x: \lvert\{ p\leq t: p\divides n \}\rvert \geq K \} 
        \leq e^{-CK}\sum_{n\leq x}\exp \Big( \sum_{\substack{p\leq t \\ p\divides n}} C \Big).
    \end{align}
    Write $f(n) = \exp \Big( \sum_{\substack{p\leq t \\ p\divides n}} C \Big)$, then $f$ is multiplicative and $f(p^{a})\leq e^{C}$ for all prime powers $p^{a}$. Therefore, \cite[Theorem 14.2]{K19} gives the bound 
    \begin{align}
        \sum_{n\leq x}f(n)
        \ll_{C} x\cdot \exp \Big( \sum_{p\leq x}\frac{f(p) - 1}{p} \Big)
        \leq x\cdot \exp \Big( \sum_{p\leq t}\frac{e^{C} - 1}{p} \Big)
        \ll x(\Log t)^{e^{C}-1},
    \end{align}
    where the last inequality is Merten's second estimate \cite[Theorem 3.4]{K19}.
    The lemma now follows immediately.
\end{proof}

\begin{lemma}
    [Divisor anatomy property]
    \label{lm:divisor_anatomy}
    Let $f$ be a multiplicative function that satisfies \eqref{eq:f-condition}.
    Then for any $M\in\N$, real $t\geq 1, K\in\R$ and $C>0$,
    \begin{align}
        \sum_{\substack{mn=M \\ \lvert\{ p\leq t: p\divides m \}\rvert \geq K}} f(n)
        \ll Me^{-CK}(\Log t)^{e^{C}-1}.
    \end{align}
\end{lemma}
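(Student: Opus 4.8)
The plan is to follow the proof of Lemma~\ref{lm:unweighted_anatomy} almost verbatim, with the sum over integers $n\leq x$ of a multiplicative function replaced by a single Dirichlet convolution evaluated at $M$. First I would apply Rankin's trick: since $e^{C\omega_t(m)}\geq e^{CK}$ whenever $\omega_t(m)\defeq\lvert\{p\leq t:p\divides m\}\rvert\geq K$,
\begin{align}
    \sum_{\substack{mn=M\\ \omega_t(m)\geq K}} f(n) \leq e^{-CK}\sum_{mn=M} f(n)\,e^{C\omega_t(m)}.
\end{align}
The weight $g(m)\defeq e^{C\omega_t(m)}$ is multiplicative with $g(p^a)=e^{C\indicator{p\leq t}}$ at every prime power, so the right-hand sum is $e^{-CK}(g\star f)(M)$, and $(g\star f)$ is multiplicative; it therefore suffices to bound its value at each prime power dividing $M$ and multiply.

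For $p>t$ the local factor is simply $(g\star f)(p^a)=(1\star f)(p^a)\leq p^a$ by \eqref{eq:f-condition}. For $p\leq t$, peeling off the $i=0$ term of $\sum_{i=0}^a g(p^i)f(p^{a-i})$ (where $g(p^0)=1$ but $g(p^i)=e^C$ for $i\geq1$) yields the identity
\begin{align}
    (g\star f)(p^a) = (1\star f)(p^a) + (e^C-1)(1\star f)(p^{a-1}) \leq p^a+(e^C-1)p^{a-1} = p^a\Big(1+\tfrac{e^C-1}{p}\Big),
\end{align}
using \eqref{eq:f-condition} at $p^a$ and at $p^{a-1}$. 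Multiplying these local bounds over all prime powers dividing $M$ gives $(g\star f)(M)\leq M\prod_{p\divides M,\,p\leq t}(1+(e^C-1)/p)$; extending the product to all primes $p\leq t$ and using $1+x\leq e^x$ bounds it by $M\exp\big(\sum_{p\leq t}(e^C-1)/p\big)\ll_C M(\Log t)^{e^C-1}$ by Merten's second estimate~\cite[Theorem 3.4]{K19}. Combined with the Rankin bound above, this is the claim.

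The one point requiring care — and the reason for writing out the local factor as an identity rather than crudely bounding each $g(p^i)\leq e^C$ — is to recover the exponent $e^C-1$, and not $e^C$, on $\Log t$. This is exactly the phenomenon present in Lemma~\ref{lm:unweighted_anatomy}: because $g(p^0)=1$, the excess of $(g\star f)(p^a)$ over $(1\star f)(p^a)$ is only $(e^C-1)(1\star f)(p^{a-1})\leq(e^C-1)p^{a-1}$, which contributes $(e^C-1)/p$ and not $e^C/p$ to the Mertens sum. No convergence or uniformity subtleties arise; everything else is a routine unwinding of the convolution.
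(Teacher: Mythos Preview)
Your proof is correct and is essentially identical to the paper's own argument: both apply Rankin's trick, recognise the resulting sum as the multiplicative convolution $(g\star f)(M)$ with $g(m)=e^{C\omega_t(m)}$, derive the same local identity $(g\star f)(p^a)=(1\star f)(p^a)+(e^C-1)(1\star f)(p^{a-1})$ at primes $p\leq t$, bound via \eqref{eq:f-condition}, and finish with Mertens. The only cosmetic difference is that you spell out the peeling-off of the $i=0$ term, whereas the paper states the factorisation directly.
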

\begin{proof}
    Take $C > 0$.
    Then 
    \begin{align}
        \sum_{\substack{mn=M \\ \lvert\{ p\leq t: p\divides m \}\rvert \geq K}} f(n)
        &\leq e^{-CK}\sum_{mn = M}\exp \Big( \sum_{\substack{p\leq t \\ p\divides M}}C \Big)f(n) \\
        &= e^{-CK}
            \prod_{\substack{p\leq t \\ p\divides M}}\Big( (1\star f)(p^{\nu_{p}(M)}) + (e^{C}-1)(1\star f)(p^{\nu_{p}(M)-1}) \Big)
            \prod_{\substack{p>t \\ p\divides M}}\Big((1\star f)(p^{\nu_{p}(M)})\Big)
        \\
        &\leq e^{-CK}\prod_{p\divides M}\Big(p^{\nu_p(M)}\Big)\prod_{\substack{p\leq t\\ p\divides M}}\Big(1 + \frac{e^{C}-1}{p}\Big)
        = Me^{-CK}\prod_{\substack{p\leq t\\ p\divides M}}\Big(1 + \frac{e^{C}-1}{p}\Big)
    \end{align}
    where the last inequality follows from \eqref{eq:f-condition}.
    Finally, the lemma follows by Merten's second estimate since 
    \begin{align}
        \prod_{\substack{p\leq t\\ p\divides M}}\Big(1 + \frac{e^{C}-1}{p}\Big)
        \leq\exp \Big( \sum_{p\leq t}\frac{e^{C} - 1}{p} \Big)
        \ll (\Log t)^{e^{C}-1}.
    \end{align}
\end{proof}

\section{Proof of Proposition \ref{prop:resolution-of-counterexample}}
\label{sec:proof_of_resolution}
We follow the same argument as in \cite[Section 5]{HVW24}.
Let $\psi, \theta, \epsilon, C, t, K, f$ and $g$ be as in the statement of Proposition \ref{prop:resolution-of-counterexample}, and $N$ be the natural number from property (\ref{itm:arithmetically-structured}) of Proposition \ref{prop:structure-of-counterexample} that $\mathcal E'$ satisfies. We will initially prove 
\begin{align}
    \label{eq:almost-contradiction}
    \mu_{\psi,\theta}^{f,g}( \mathcal E' ) \ll_C (\Log t)^{\frac{1}{2}(e^{40C}-1)}(\mu_\psi^{f}(V')\mu_\theta^{g}(W')e^{-10CK})^{\frac{1}{2}},
\end{align}
where $V' \defeq \mathcal E'\vert_V$ and $W'\defeq\mathcal E'\vert_W$.
Since $p_0(\epsilon, C)$ is sufficiently large, this is nearly in contradiction to property (\ref{itm:is-near-counterexample}) of Proposition \ref{prop:structure-of-counterexample}, except that the exponent is $\frac{1}{2}$ rather than $\frac{1}{2}+ \epsilon$. There is a short final argument to deal with this point.
\begin{proof}
    [Proof of \eqref{eq:almost-contradiction}]
    For each $(v, w) \in V'\times W'$, we define 
    \begin{align}
        &v^{-} \defeq \prod_{p:\ \nu_p(v/N)=-1}p,\qquad
        &v^{+} \defeq \prod_{p:\ \nu_p(v/N)=1}p,\\
        &w^{-} \defeq \prod_{p:\ \nu_p(w/N)=-1}p,\qquad
        &w^{+} \defeq \prod_{p:\ \nu_p(w/N)=1}p.
    \end{align}
    Since $\mathcal E'$ satisfies property (\ref{itm:arithmetically-structured}), if $(v,w)\in\mathcal E'$ then all four of $v^{-}, v^{+}, w^{-}$ and $w^{+}$ are coprime, and we may write 
    \begin{align}
        v = N \frac{v^{+}}{v^{-}},\qquad
        w = N \frac{w^{+}}{w^{-}}.
    \end{align}
    Therefore, as $(v,w)\in\mathcal E_{\psi, \theta}^{f,g}$, the condition $D_{\psi, \theta}(v,w)\leq 1$ implies 
    \begin{align}
        \psi(v)\leq \frac{\gcd(v,w)}{w} = \frac{1}{v^{-}w^{+}},\qquad
        \theta(w)\leq \frac{\gcd(v,w)}{v}= \frac{1}{v^{+}w^{-}}.
    \end{align}
    In particular, if we let $w_0\in W'$ maximise $w_0^{+}$, and let $v_0(w)\in\Gamma_{\mathcal E'}(w)$ maximise $v_0^{+}$ over $\Gamma_{\mathcal E'}(w)$, we have 
    \begin{align}
        \psi(v)\leq \frac{1}{v^{-}w_0^{+}},\qquad
        \theta(w)\leq \frac{1}{v_0^{+}(w)w^{-}},
    \end{align}
    for all $(v,w)\in\mathcal E'$ with $v\in\Gamma_{\mathcal E'}(w_0)$.
    By applying property (\ref{itm:combinatorially-structured}) twice and using the bounds above, 
    \begin{align}
        \label{eq:resolution-initial-bound}
        \frac{\mu_{\psi,\theta}^{f,g}( \mathcal E' )}{(\mu_{\psi}^{f}(V')\mu_{\theta}^{g}(W'))^{\frac{1}{2}}}
        &\leq (q')^{\frac{1}{2}}\Big(\frac{\mu_{\psi,\theta}^{f,g}( \mathcal E' )}{\mu_{\psi}^{f}(V')}\Big)^{\frac{1}{2}}\mu_{\psi}^{f}(\Gamma_{\mathcal E'}(w_0))^{\frac{1}{2}}\\
        &\leq q'\Big(\sum_{v\in\Gamma_{\mathcal E'}(w_0)}\mu_{\psi}^{f}(v)\mu_{\theta}^{g}(\Gamma_{\mathcal E'}(v))\Big)^{\frac{1}{2}}\\
        &=q'\Big(\sum_{v\in\Gamma_{\mathcal E'}(w_0)}\frac{f(v)\psi(v)}{v}\sum_{w\in\Gamma_{\mathcal E'}(v)}\frac{g(w)\theta(w)}{w}\Big)^{\frac{1}{2}}\\
        &\leq q'\Big( \frac{1}{w^{+}_0}\sum_{w\in W'}\frac{g(w)}{ww^{-}}\cdot \frac{1}{v_0^{+}(w)}\sum_{v\in\Gamma_{\mathcal E'}(w)}\frac{f(v)}{vv^{-}}  \Big)^{\frac{1}{2}}.
    \end{align}
    The sum above is over all pairs $(v,w)\in\mathcal E'\subseteq\mathcal E_{\psi,\theta}^{f,g}$. Therefore, for all such pairs we have $\frac{vw}{\gcd(v,w)^{2}} = v^{-}v^{+}w^{-}w^{+}$ and hence the condition $\omega_t(v,w)\geq K$ becomes 
    \begin{align}
        \abs{\{ p\leq t :\ p\divides  v^{-}v^{+}w^{-}w^{+}\}}\geq K.
    \end{align}
    It follows that either $\abs{\{ p\leq t :\ p\divides  v^{-}\}}\geq K/4$ or similarly with $p\divides v^{+}, p\divides w^{-}$ or $p\divides w^{+}$.
    Therefore, 
    \begin{align}
        \label{eq:bound_split}
        \frac{\mu_{\psi,\theta}^{f,g}( \mathcal E' )}{(\mu_{\psi}^{f}(V')\mu_{\theta}^{g}(W'))^{\frac{1}{2}}}
        \leq q'(S_1 + S_2 + S_3 + S_4)^{\frac{1}{2}}
    \end{align}
    where 
    \begin{align}
        \label{eq:def_S1}
        S_1 \defeq \frac{1}{w^{+}_0}\sum_{w\in W'}\frac{g(w)}{ww^{-}}\cdot \frac{1}{v_0^{+}(w)}\sum_{\substack{v\in\Gamma_{\mathcal E'}(w)  \\ \abs{\{ p\leq t :\ p\divides  v^{-}\}}\geq K/4}}\frac{f(v)}{vv^{-}}
    \end{align}
    and $S_2$, $S_3$ and $S_4$ are similar expressions where the anatomy condition is placed on $p\divides v^{+}, p\divides w^{-}$ and $p\divides w^{+}$ respectively. We now bound each of these expressions separately.

    \textbf{Bounding $S_1$.}
    We bound the inner sum from \eqref{eq:def_S1}. Since $v = v^{+}\cdot \frac{N}{v^{-}}$ and $v^{+}\leq v_0^{+}(w)$, we can reparametrise $v$ in terms of variables $v^{+}$ and $v^{-}$ and obtain 
    \begin{align}
        \label{eq:bound_inner_S1}
        \sum_{\substack{v\in\Gamma_{\mathcal E'}(w)  \\ \abs{\{ p\leq t :\ p\divides  v^{-}\}}\geq K/4}}\frac{f(v)}{vv^{-}}
        \leq \sum_{v^{+}\leq v_0^{+}(w)}\sum_{\substack{v^{-}\divides N  \\ \gcd(v^{-}, v^{+}) = 1 \\ \abs{\{ p\leq t :\ p\divides  v^{-}\}}\geq K/4}}\frac{f(v^{+}\cdot \frac{N}{v^{-}})}{Nv^{+}}.
    \end{align}
    We factorise the $f$ term to remove the influence of $v^{+}$. For notational convenience, write $N_{v^+}$ for the coprime part of $N$ to $v^{+}$. That is,
    \begin{align}
        N_{v^+} \defeq \prod_{p \notdivides v^{+}}p^{\nu_p(N)}.
    \end{align}
    Then 
    \begin{align}
        \frac{f(v^{+}\cdot \frac{N}{v^{-}})}{Nv^{+}} = \frac{f(v^{+}\frac{N}{N_{v^+}}\cdot \frac{N_{v^+}}{v^{-}})}{v^{+}\frac{N}{N_{v^+}}\cdot N_{v^+}} = \frac{f(v^{+}\frac{N}{N_{v^+}})}{v^{+}\frac{N}{N_{v^+}}}\cdot \frac{f(\frac{N_{v^+}}{v^{-}})}{N_{v^+}}\leq\frac{f(\frac{N_{v^+}}{v^{-}})}{N_{v^+}}.
    \end{align}
    The right-hand side of \eqref{eq:bound_inner_S1} then becomes 
    \begin{align}
        &\leq \sum_{v^{+}\leq v_0^{+}(w)} \Big( \frac{1}{N_{v^+}}\sum_{\substack{v^{-}\divides N_{v^+} \\ \abs{\{ p\leq t :\ p\divides  v^{-}\}}\geq K/4}}f(\frac{N_{v^+}}{v^{-}})  \Big)\\
        &\leq \sum_{v^{+}\leq v_0^{+}(w)} \Big( \frac{1}{N_{v^+}}\sum_{\substack{mn= N_{v^+} \\ \abs{\{ p\leq t :\ p\divides  m\}}\geq K/4}}f(n)  \Big) \ll v_{0}^{+}(w)e^{-10CK}(\Log t)^{e^{40C}-1},
    \end{align}
    by Lemma \ref{lm:divisor_anatomy}. From the definition of $S_1$ we now have 
    \begin{align}
        \label{eq:S1_intermediate_bound}
        S_1
        \ll e^{-10CK}(\Log t)^{e^{40C}-1}\cdot \frac{1}{w_0^{+}}\sum_{w\in W'} \frac{g(w)}{ww^{-}}.
    \end{align}
    This inner sum can be dealt with in a similar way to \eqref{eq:bound_inner_S1}. Writing $N_{w^+}$ now for the coprime part of $N$ to $w^{+}$, 
    \begin{align}
        \sum_{w\in W'}\frac{g(w)}{ww^{-}} \leq \sum_{w^{+}\leq w_0^{+}}\Big(   \frac{1}{N_{w^+}}\sum_{mn = N_{w^+}}g(n)\Big)\leq w_0^{+}.
    \end{align}
    In this case we have used the fact that $(1\star g)(n)\leq n$ instead of Lemma \ref{lm:divisor_anatomy}. Hence $S_1 \ll e^{-10CK}(\Log t)^{e^{40C}-1}$.
    
    \textbf{Bounding $S_2$.}
    We have 
    \begin{align}
        S_2 \defeq \frac{1}{w^{+}_0}\sum_{w\in W'}\frac{g(w)}{ww^{-}}\cdot \frac{1}{v_0^{+}(w)}\sum_{\substack{v\in\Gamma_{\mathcal E'}(w)  \\ \abs{\{ p\leq t :\ p\divides  v^{+}\}}\geq K/4}}\frac{f(v)}{vv^{-}}
    \end{align}
    We use a similar strategy as above to deal with the inner sum,
    \begin{align}
        \sum_{\substack{v\in\Gamma_{\mathcal E'}(w)  \\ \abs{\{ p\leq t :\ p\divides  v^{+}\}}\geq K/4}}\frac{f(v)}{vv^{-}}
        &\leq \sum_{\substack{v^{+}\leq v_0^{+}(w)\\ \abs{\{ p\leq t :\ p\divides  v^{+}\}}\geq K/4 }}\sum_{\substack{v^{-}\divides N\\\gcd(v^{-},v^{+})=1}} \frac{f(v^{+}\cdot \frac{N}{v^{-}})}{Nv^{+}}\\
        &\leq \sum_{\substack{v^{+}\leq v_0^{+}(w)\\ \abs{\{ p\leq t :\ p\divides  v^{+}\}}\geq K/4 }}\Big( \frac{1}{N_{v^+}}\sum_{v^{-}\divides N_{v^+}}f(\frac{N_{v^+}}{v^{-}}) \Big)\\
        &\leq\sum_{\substack{v^{+}\leq v_0^{+}(w)\\ \abs{\{ p\leq t :\ p\divides  v^{+}\}}\geq K/4 }} 1 \ll_C v^{+}_0(w)e^{-10CK}(\Log t)^{e^{40C}-1}.
    \end{align}
    Here the last step uses Lemma \ref{lm:unweighted_anatomy} instead of \ref{lm:divisor_anatomy}.
    The bound $S_2 \ll_Ce^{-10CK}(\Log t)^{e^{40C}-1}$ then follows as in the $S_1$ case.

    \textbf{Bounding $S_3$ and $S_4$.}
    The bound $S_3\ll e^{-10CK}(\Log t)^{e^{40C}-1}$ follows by an analogous argument to the one for bounding $S_1$, with the non-trivial application of Lemma \ref{lm:divisor_anatomy} now made on the $w$ sum instead of the $v $ sum; the bound $S_4\ll_C e^{-10CK}(\Log t)^{e^{40C}-1}$ follows by an analogous argument to the one for bounding $S_2$, with the non-trivial application of Lemma \ref{lm:unweighted_anatomy} made on the $w$ sum instead of the $v$ sum.

    Therefore, $S_1 + S_2+S_3+S_4\ll_C e^{-10CK}(\Log t)^{e^{40C}-1}$. Substituting this bound into \eqref{eq:bound_split} and using $q'<2$, we have
    \begin{align}
        \frac{\mu_{\psi,\theta}^{f,g}( \mathcal E' )}{(\mu_{\psi}^{f}(V')\mu_{\theta}^{g}(W'))^{\frac{1}{2}}}
        \ll_C (e^{-10CK}(\Log t)^{e^{40C}-1})^{\frac{1}{2}},
    \end{align}
    resolving \eqref{eq:almost-contradiction}.
\end{proof}

\begin{proof}
    [Proof of Proposition \ref{prop:resolution-of-counterexample}]
    Let $\mathcal E'$ be as in the statement.
    We will show that, provided that $p_{0}(\epsilon, C)$ is sufficiently large, then 
    $\mathcal E'$ does not satisfy property (\ref{itm:is-near-counterexample}) of Proposition \ref{prop:structure-of-counterexample}, concluding the proof of Proposition \ref{prop:resolution-of-counterexample}.
    First observe that when $K<0$, the conditions of Proposition \ref{prop:resolution-of-counterexample} are the same as if $K=0$. However, setting $K=0$ in this case strengthens the conclusion that $\mathcal E'$ does not satisfy property (\ref{itm:is-near-counterexample}). Therefore, without loss of generality we may assume that $K\geq 0$.
    We also suppose that $p_0(\epsilon, C)$, and therefore $P_{\psi, \theta}(\epsilon, C)$, is sufficiently large.

    In the case that
    \begin{align}
        \label{eq:conclusion_easy_case}
        (\mu_\psi^{f}(V')\mu_\theta^{g}(W'))^{\frac{1}{q}}
        \leq \frac{1}{2}(100e^{C})^{P_{\psi, \theta}(\epsilon, C)}(\Log t)^{\frac{1}{2}(e^{40C}-1)}e^{-\frac{CK}{q'}},
    \end{align}
    then the trivial bound
    \begin{align}
        \mu_{\psi,\theta}^{f,g}( \mathcal E' ) \leq \mu_\psi^{f}(V')\mu_\theta^{g}(W')
    \end{align}
    immediately implies that $\mathcal E'$ does not satisfy property (\ref{itm:is-near-counterexample}) of Proposition \ref{prop:structure-of-counterexample}, concluding the proof. So we may assume that \eqref{eq:conclusion_easy_case} does not hold.

    Using \eqref{eq:almost-contradiction}, for some constant $M_C$ depending only on $C$, 
    \begin{align}
        \mu_{\psi,\theta}^{f,g}( \mathcal E' )
        &\leq M_C(\Log t)^{\frac{1}{2}(e^{40C}-1)}(\mu_\psi^{f}(V')\mu_\theta^{g}(W')e^{-10KC})^{\frac{1}{2}} \\
        &\leq M_C(\Log t)^{\frac{1}{2}(e^{40C}-1)}(\mu_\psi^{f}(V')\mu_\theta^{g}(W')e^{-10KC})^{\frac{1}{2}}\Big( \frac{2^{q}e^{\frac{CKq}{q'}}(\Log t)^{-q(e^{40C}-1)}}{(100e^{C})^{qP_{\psi,\theta}(\epsilon, C)}}\mu_\psi^{f}(V')\mu_\theta^{g}(W') \Big)^{\epsilon}\\
        &\leq (\Log t)^{\frac{1}{2}(e^{40C}-1)}(\mu_\psi^{f}(V')\mu_\theta^{g}(W')e^{-KC})^{\frac{1}{q'}}\cdot e^{CK(\frac{1}{q'}+\epsilon \frac{q}{q'}-5)}\\
        &\leq (\Log t)^{\frac{1}{2}(e^{40C}-1)}(\mu_\psi^{f}(V')\mu_\theta^{g}(W')e^{-KC})^{\frac{1}{q'}}
    \end{align} 
    since $P_{\psi ,\theta}$ can be chosen sufficiently large and $\epsilon \leq 2/5$ by a short computation. This also proves that $\mathcal E'$ does not satisfy property (\ref{itm:is-near-counterexample}) of Proposition \ref{prop:structure-of-counterexample}, concluding the whole argument.
\end{proof}

\bibliographystyle{plain}
\bibliography{bibliography}

\end{document}